\theoremstyle{plain}
\newtheorem{theorem}{Theorem}[section]
\newtheorem{lemma}[theorem]{Lemma}
\newtheorem{corollary}[theorem]{Corollary}
\newtheorem{refthm}{Theorem}
\newtheorem*{ryabykhthm}{Ryabykh's Theorem}
\newtheorem*{cauchygreenthm}{Cauchy-Green Theorem}
\theoremstyle{definition}
\newtheorem{definition}[theorem]{Definition}
\numberwithin{equation}{section}
\newcommand{\conj}[1]{\overline{#1}}
\DeclareMathOperator{\sgn}{sgn}
\DeclareMathOperator{\pv}{p.v.}
\DeclareMathOperator{\Rp}{Re}
\begin{document}

\subjclass[2010]{Primary 30H10, 30H20. Secondary 46B10, 46E15.}

\title[Extremal Problems and an Extension of Ryabykh's Theorem]{Extremal Problems in Bergman Spaces and an Extension of 
Ryabykh's Theorem}
\author{Timothy Ferguson}
\address{Department of Mathematics\\1326 Stevenson Center\\Vanderbilt University\\Nashville, TN 37240}
\email{timothy.j.ferguson@vanderbilt.edu}
\thanks{Thanks to Peter Duren for his help in editing the manuscript.}

\begin{abstract}
We study linear extremal problems in the Bergman space $A^p$ of the unit disc 
for $p$ an even integer.  
Given a functional on the dual space of $A^p$ with 
representing kernel $k \in A^q$, where $1/p + 1/q = 1$, 
we show that if the Taylor coefficients of 
$k$ are sufficiently small, then the extremal function $F \in H^{\infty}$.  
We also show that if $q \le q_1 < \infty$, then 
$F \in H^{(p-1)q_1}$ if and only if $k \in H^{q_1}$. 
These results extend and provide a partial converse to a theorem of Ryabykh.
\end{abstract}

\maketitle

An analytic function $f$ in the unit disc ${\mathbb{D}}$ is said to belong to the 
Bergman space $A^p$ if 
\begin{equation*}  \|f\|_{A^p} = \left\{ \int_{{\mathbb{D}}} |f(z)|^p d\sigma(z)\right\}^{1/p} < \infty. \end{equation*}
Here $\sigma$ denotes normalized area measure, so that $\sigma({\mathbb{D}})=1.$ 
For $1<p<\infty$, each functional $\phi \in (A^p)^*$ has a unique 
representation 
\begin{equation*}
\phi(f) = \int_{{\mathbb{D}}} f \conj{k}\, d\sigma,
\end{equation*}
for some $k \in A^q$, where $q = p/(p-1)$ is the conjugate index.  
The function $k$ is called the kernel of the functional $\phi.$

In this paper we study the extremal problem of maximizing 
$\Rp\phi(f)$ among all functions $f \in A^p$ of unit norm.   
If $1<p<\infty$, then an extremal function always exists and is unique. 
However, to find it explicitly is in general a difficult problem, and 
few explicit solutions are known. Here we consider the problem of 
determining whether the kernel being ``well-behaved'' implies that 
the extremal function is also ``well-behaved.'' A known result 
in this direction is Ryabykh's theorem, which states that if the kernel is 
actually in the Hardy space $H^q$, then the extremal function must be in 
the Hardy space $H^p$.  In \cite{me}, we gave a proof of Ryabykh's 
theorem based on general properties of extremal functions in
uniformly convex spaces.

In this paper, we obtain a sharper version of Ryabykh's theorem
 in the case where $p$ is an even integer.  Our results are:
\begin{itemize}
\item For $q \le q_1 < \infty$, the extremal function 
$F \in H^{(p-1)q_1}$ if and only if the kernel $k \in H^{q_1}$.
\item If the Taylor coefficients of $k$ are ``small enough,'' then 
$F \in H^\infty$.
\item The map sending a kernel $k \in H^q$ to 
its extremal function $F\in A^p$ is a continuous map from $H^q \setminus 0$ 
into $H^p$. 
\end{itemize}

Our proofs rely heavily on Littlewood-Paley theory, and seem to require that 
$p$ be an even integer.  It is an open problem whether the results hold 
without this assumption.

\section{Extremal Problems and Ryabykh's Theorem}\label{intro}
We begin with some notation.  
If $f$ is an 
analytic function, $S_n f$ denotes its $n^{th}$ Taylor polynomial 
at the origin. Lebesgue area measure is denoted by $dA$, and 
$d\sigma$ denotes normalized area measure. 

If $h$ is a measurable function in the unit disc, the principal 
value of its integral is
\begin{equation*}
\pv \int_{{\mathbb{D}}} h\, dA = 
\lim_{r\rightarrow 1} \int_{r{\mathbb{D}}} h\, dA,
\end{equation*}
if the limit exists.  

We now recall some basic facts about Hardy and Bergman spaces.  For proofs 
and further information, see \cite{D_Hp} and \cite{D_Ap}. 
Suppose that $f$ is analytic in the unit disc.  For $0 < p < \infty$ and 
$0 < r < 1,$ the integral mean of $f$ is 
$$M_p(f,r)=
\bigg\{ \frac{1}{2\pi} \int_{0}^{2\pi} |f(re^{i\theta})|^p d\theta 
\bigg\}^{1/p}.$$
If $p=\infty$, we write
$$
M_\infty(f,r) = \max_{0\le \theta < 2\pi} |f(re^{i\theta})|.$$
For fixed $f$ and $p$, the integral means are increasing 
functions of $r.$  If $M_p(f,r)$ is bounded we say that $f$ is in 
the Hardy space $H^p.$  
For any function $f$ in $H^p,$ the radial limit 
$f(e^{i\theta}) = \lim_{r \rightarrow 1^-} f(re^{i\theta})$ 
exists for almost every $\theta.$   
 An $H^p$ function is uniquely determined by 
the values of its boundary function on any set of positive measure.  
The space $H^p$ is a Banach space with norm 
$$\| f \|_{H^p} = \sup_r M_p(f,r) = \|f(e^{i\theta})\|_{L^p}.$$
It is useful to regard $H^p$ as a subspace of 
$L^p({\mathbb{T}}),$ where ${\mathbb{T}}$ denotes the unit circle. 
For $0<p<\infty$, if $f \in H^p$, then $f(re^{i\theta})$ 
converges to $f(e^{i\theta})$ in $L^p$ norm as $r \rightarrow 1$.

For $1<p<\infty$, 
the dual space $(H^p)^*$ is isomorphic to $H^q$, where $1/p + 1/q = 1$,
with an element
$k\in H^q$ representing the functional $\phi$ 
defined by 
\[\phi(f) = \frac{1}{2\pi}\int_{0}^{2\pi} f(e^{i\theta})
\conj{k(e^{i\theta})}\,d\theta.\]
 This isomorphism is not an isometry unless $p=2$, 
but it is true that $\|\phi\|_{(H^p)^*} \le \|k\|_{H^q} \le 
C \|\phi\|_{(H^p)^*}$ for some constant $C$ depending only on $p$. 
If $f\in H^p$ for $1<p<\infty$, then $S_n f \rightarrow f$ in 
$H^p$ as $n \rightarrow \infty$. 
The Szeg\H{o} projection $S$ maps each function $f \in L^1({\mathbb{T}})$ into 
a function analytic in ${\mathbb{D}}$ defined by 
\begin{equation*}
Sf(z) = \frac{1}{2\pi}\int_0^{2\pi} \frac{f(e^{it})}{1-e^{-it}z} dt.
\end{equation*}
It leaves $H^1$ functions fixed and maps $L^p$ boundedly onto $H^p$ for 
$1 < p < \infty$.   
If $f \in L^p$ for $1<p<\infty$ and 
$\displaystyle f(\theta) = \sum_{n=-\infty}^\infty a_n e^{in\theta},$ 
then $\displaystyle Sf(z) = \sum_{n=0}^\infty a_n z^n.$

For $1 < p < \infty,$ the dual of the Bergman space $A^p$ is isomorphic to $A^q$, 
where $1/p + 1/q = 1,$ and $k\in A^q$ represents the functional 
defined by $\phi(f) = \int_{{\mathbb{D}}} f(z)\conj{k(z)}\,d\sigma(z)$. 
Note that this isomorphism is actually conjugate-linear.  
It is not an isometry unless $p=2$, 
but  
if the functional $\phi \in (A^p)^*$ is represented by the 
function $k \in A^q$, then
\begin{equation}\label{A_q_isomorphism}
\| \phi \|_{(A^p)^*} \le \| k \|_{A^q} \le C_p \| \phi \|_{(A^p)^*}
\end{equation}
where $C_p$ is a constant depending only on $p$. 
We remark that $H^p \subset A^p$, and in fact 
$\|f\|_{A^p} \le \|f\|_{H^p}.$ 
If $f \in A^p$ for $1<p<\infty$, then $S_n f \rightarrow f$ in $A^p$ 
as $n\rightarrow \infty$.

In this paper the only Bergman spaces we consider are those with
$1<p<\infty$. 
For a given linear 
functional $\phi \in (A^p)^*$ such that $\phi \ne 0$,
we investigate the extremal problem of finding a function $F \in A^p$ 
with norm 
$\|F\|_{A^p} = 1$ for which
\begin{equation}\label{norm1}
\Rp \phi(F) = \sup_{\|g\|_{A^p}=1} \Rp \phi(g) = \| \phi \|.
\end{equation}
Such a function $F$ is called an extremal function, and 
we say that $F$ is an extremal function for a function $k \in A^q$ 
if $F$ solves problem 
\eqref{norm1} for the functional $\phi$ with kernel $k$. 
This problem has been studied  by
Vukoti\'{c} \cite{Dragan}, Khavinson and Stessin \cite{Khavinson_Stessin},
and Ferguson \cite{me}, among others.  
Note that for $p=2$, the extremal function is $F = k/\|k\|_{A^2}.$ 

A closely related problem is that of finding $f\in A^p$
such that 
$\phi(f) = 1$ and 
\begin{equation}\label{value1}
\|f\|_{A^p} = \inf_{\phi(g) = 1} \|g\|_{A^p}.
\end{equation}
If $F$ solves the problem \eqref{norm1}, then $\frac{F}{\phi(F)}$ 
solves the problem 
\eqref{value1}, and if $f$ solves \eqref{value1}, then 
$\frac{f}{\|f\|}$ solves \eqref{norm1}.
When discussing either of these problems, we always assume that $\phi$ is 
not the zero functional; in other words, that $k$ is not identically $0$. 

The problems \eqref{norm1} and \eqref{value1} each have a unique solution 
when $1<p<\infty$ (see \cite{me}, Theorem 1.4). Also, for every 
function $f \in A^p$ such that $f$ is not identically $0$, there 
is a unique $k \in A^q$ such that 
$f$ solves problem \eqref{value1} for $k$ 
(see \cite{me}, Theorem 3.3).  This implies that for each $F \in A^p$ with 
$\|F\|_{A^p} = 1$, there is some nonzero $k$ such that $F$ solves problem 
\eqref{norm1} for $k.$  Furthermore, any two such kernels $k$ are positive 
multiples of each other. 

The Cauchy-Green theorem is an important tool in this paper. 
\begin{cauchygreenthm}
If $\Omega$ is a region in the plane with 
piecewise smooth boundary and $f\in C^1({\overline{\Omega}})$, then
$$ \frac{1}{2 i} \int_{\partial \Omega} f(z) \, dz = \int_{\Omega} 
\frac{\partial}{\partial \conj{z}} f(z) \,dA(z),$$
where $\partial \Omega$ denotes the 
boundary of $\Omega$.
\end{cauchygreenthm}

The next result is an important characterization of extremal functions in 
$A^p$ for $1<p<\infty$ (see \cite{Shapiro_Approx}, p.55). 
\begin{refthm}\label{integral_extremal_condition} 
Let $1 < p < \infty$ and let $\phi \in (A^p)^*$.   
A function $F \in A^p$ with $\|F\|_{A^p} = 1$ satisfies 
$$\Rp \phi(F) = \sup_{\|g\|_{A^p} =1} \Rp \phi(g) = \| \phi \|$$
if and only if
$$\int_{{\mathbb{D}}} h |F|^{p-1} \conj{\sgn F}  \, d\sigma = 0$$ for all 
$h \in A^p$ with $\phi(h) = 0.$  
If $F$ satisfies the above conditions, then 
$$\int_{{\mathbb{D}}} h |F|^{p-1} \conj{\sgn F}\,  d\sigma 
= \frac{\phi(h)}{\| \phi \|}$$
for all $h\in A^p.$
\end{refthm}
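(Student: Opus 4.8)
The plan is to treat \eqref{norm1} as the maximization of the \emph{real-linear} functional $\Rp\phi$ over the unit sphere of $A^p$, to extract the stated orthogonality relation from the first-order (variational) condition, and then to reverse the argument using duality. Throughout I would use the identity $|F|^{p-1}\conj{\sgn F} = |F|^{p-2}\conj{F}$ (valid where $F\ne 0$, hence almost everywhere) together with the fact that $(p-1)q = p$, so that $\||F|^{p-1}\|_{L^q} = \|F\|_{A^p}^{p-1} = 1$; this places the weight $|F|^{p-1}\conj{\sgn F}$ in $L^q({\mathbb{D}})$, which is what makes all the integrals bounded.

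For the necessity half, suppose $F$ is extremal and fix $h \in A^p$ with $\phi(h)=0$. Since $\phi(F+th)=\phi(F)$ for every real $t$, if $\|F+th\|_{A^p}$ were ever less than $1$ we could rescale $F+th$ to unit norm and strictly increase $\Rp\phi$, contradicting extremality; hence $t\mapsto \|F+th\|_{A^p}^p$ attains its minimum at $t=0$. Differentiating under the integral sign and using $\frac{\partial}{\partial t}|F+th|^p = p\,|F+th|^{p-2}\,\Rp\!\big((F+th)\conj{h}\big)$, the vanishing of the derivative at $t=0$ gives
\[ \Rp \int_{{\mathbb{D}}} h\,|F|^{p-1}\conj{\sgn F}\,d\sigma = 0. \]
Replacing $h$ by $ih$, which also lies in $\ker\phi$, kills the imaginary part as well, so the full complex integral vanishes.

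For the sufficiency half I would introduce the linear functional
\[ \psi(h) = \int_{{\mathbb{D}}} h\,|F|^{p-1}\conj{\sgn F}\,d\sigma, \qquad h \in A^p. \]
By Hölder's inequality and $\||F|^{p-1}\|_{L^q}=1$ one has $|\psi(h)|\le \|h\|_{A^p}$, so $\psi\in(A^p)^*$ with $\|\psi\|\le 1$, while $\psi(F)=\int_{{\mathbb{D}}}|F|^p\,d\sigma = 1$ forces $\|\psi\|=1$ and shows $F$ is extremal for $\psi$. The hypothesis says $\psi$ annihilates $\ker\phi$; since $\phi\ne 0$ has a codimension-one kernel, the standard functional-analytic fact gives $\psi = \lambda\phi$ for a scalar $\lambda$, and evaluating at $F$ yields $\lambda\phi(F)=1$ and $|\lambda|\,\|\phi\| = 1$. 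The orthogonality condition is invariant under $F\mapsto e^{i\gamma}F$, so after normalizing the phase so that $\phi(F)\ge 0$ we obtain $\phi(F)=\|\phi\|=1/\lambda$, i.e. $\Rp\phi(F)=\|\phi\|$; in particular $\lambda = 1/\|\phi\|$, which is precisely the final identity $\int_{{\mathbb{D}}} h\,|F|^{p-1}\conj{\sgn F}\,d\sigma = \phi(h)/\|\phi\|$.

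The step I expect to be the main obstacle is justifying the differentiation under the integral sign in the necessity argument. For $1<p<2$ the map $w\mapsto |w|^p$ is only $C^1$, so I would need a dominated-convergence estimate for the difference quotients $t^{-1}\big(|F+th|^p-|F|^p\big)$ that is uniform near the zero set of $F$; the elementary bound $\big||w_1|^p-|w_2|^p\big|\le p\,(|w_1|+|w_2|)^{p-1}|w_1-w_2|$ produces the integrable majorant $p\,(2|F|+|h|)^{p-1}|h|$ (integrable by Hölder, again since $(p-1)q=p$). Everything else is either the elementary rescaling/optimality comparison or the duality facts recorded above.
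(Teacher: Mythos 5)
The paper itself offers no proof of this statement; it is quoted as a known result from \cite{Shapiro_Approx}, so your proposal has to stand on its own. Your necessity half does: the rescaling comparison (which uses $\Rp\phi(F)=\|\phi\|>0$) correctly shows that $t\mapsto\|F+th\|_{A^p}^p$ is minimized at $t=0$; the differentiation under the integral sign is genuinely justified by your majorant $p\,(2|F|+|h|)^{p-1}|h|$, integrable because $(p-1)q=p$; and the $h\mapsto ih$ device recovers the full complex integral. The final identity is also recoverable from your duality setup, since for a genuinely extremal $F$ the inequalities $\Rp\phi(F)=\|\phi\|$ and $|\phi(F)|\le\|\phi\|$ force $\phi(F)=\|\phi\|$, whence $\lambda=1/\|\phi\|$ in your relation $\psi=\lambda\phi$.

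The sufficiency half, however, has a genuine gap at the words ``after normalizing the phase so that $\phi(F)\ge 0$.'' You may not renormalize: the claim concerns the given $F$, and replacing $F$ by $e^{i\gamma}F$ proves that \emph{some rotation} of $F$ is extremal, not that $F$ is. What your argument actually yields from the orthogonality hypothesis is $\psi=\lambda\phi$, $\lambda\phi(F)=1$, $|\lambda|\,\|\phi\|=1$, hence only $|\phi(F)|=\|\phi\|$, and this cannot be improved, because the implication as literally stated is false. Indeed, take $p=2$ and $F=ik/\|k\|_{A^2}$: then $|F|^{p-1}\conj{\sgn F}=\conj{F}=-i\,\conj{k}/\|k\|_{A^2}$, so $\int_{{\mathbb{D}}}h\,|F|^{p-1}\conj{\sgn F}\,d\sigma=-i\,\phi(h)/\|k\|_{A^2}=0$ for every $h$ with $\phi(h)=0$, yet $\Rp\phi(F)=\Rp\bigl(i\|k\|_{A^2}\bigr)=0\ne\|\phi\|$. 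The source of the trouble is exactly the symmetry you observed: the orthogonality condition is invariant under $F\mapsto e^{i\gamma}F$ while extremality is not, so the converse direction is only true with the added normalization that $\phi(F)$ be real and positive (with that hypothesis your duality argument closes correctly). This defect is harmless for the paper, which only ever invokes the necessity direction together with the final identity for extremal $F$, both of which your proof establishes; but as written your sufficiency step is invalid, and no argument can repair it without strengthening the hypothesis.
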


Ryabykh's theorem relates extremal problems 
in Bergman spaces to Hardy spaces.  It says that if the kernel for a 
linear functional is not only in $A^q$ but also in $H^q$, then the extremal 
function is not only in $A^p$ but in $H^p$ as well.  

\begin{ryabykhthm}\label{ryabykh_thm}
Let $1 < p < \infty$ and let $1/p + 1/q = 1.$  Suppose that 
$\phi \in (A^p)^*$ and $\phi(f) = \int_{{\mathbb{D}}} f \conj{k} \, d\sigma$ 
for some $k \in H^q$.
Then the solution $F$ to the extremal problem \eqref{norm1} belongs to 
$H^p$ and satisfies 
\begin{equation}\label{ryabkh_estimate}
\|F\|_{H^p} \le \Bigg\{ \bigg[ \max(p-1,1)
\bigg]\frac{C_p\|k\|_{H^q}}{\| k \|_{A^q}}\Bigg\}^{1/(p-1)},
\end{equation}
where $C_p$ is the constant in \eqref{A_q_isomorphism}.
\end{ryabykhthm}
Ryabykh\cite{Ryabykh} proved that $F \in H^p.$  The bound \eqref{ryabkh_estimate} was 
proved in \cite{me}, by a variant of Ryabykh's proof.

As a corollary Ryabykh's theorem implies that the solution to the 
problem \eqref{value1} is in $H^p$ as well.
Note that the constant $C_p \rightarrow \infty$ as 
$p \rightarrow 1$ or $p \rightarrow \infty$.

To obtain our results, including a generalization of Ryabykh's theorem, we will need 
the following technical lemmas.  Their 
proofs, which involve Littlewood-Paley theory, are deferred to the 
end of the paper. 

\begin{lemma}\label{kleq} Let $p$ be an even integer.  Let $f \in H^p$ and let 
$h$ be a polynomial. Then 
\begin{equation*}
\pv \int_{{\mathbb{D}}} |f|^{p-1} \conj{\sgn f}f'h\, d\sigma = 
\lim_{n\rightarrow \infty} \int_{{\mathbb{D}}} |f|^{p-1} \conj{\sgn f}(S_n f)'h \,d\sigma.
\end{equation*}
  
\end{lemma}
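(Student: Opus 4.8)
The plan is to exploit that $p=2m$ is even so that the weight becomes a product of a holomorphic and an antiholomorphic factor, evaluate the left-hand principal value in closed form by the Cauchy-Green theorem, produce the analogous closed form for the polynomial approximants on the right, and match them in the limit. Since $\sgn f = f/|f|$ off the zeros of $f$ and $p$ is even,
\[
|f|^{p-1}\conj{\sgn f} = |f|^{p-2}\conj{f} = (f\conj{f})^{m-1}\conj{f} = f^{m-1}\conj{f}^{\,m},
\]
so both integrands factor as the antiholomorphic function $\conj{f}^{\,m}$ times a holomorphic one: $f^{m-1}f'h$ on the left and $f^{m-1}(S_nf)'h$ on the right.

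For the left-hand side I use the identity
\[
f^{m-1}\conj{f}^{\,m}f'h = \tfrac1m\frac{\partial}{\partial z}\big(|f|^{2m}h\big) - \tfrac1m|f|^{2m}h',
\]
which holds because $f^{m-1}f'=\tfrac1m\frac{\partial}{\partial z}(f^m)$ and $\frac{\partial}{\partial \conj{z}}\conj{f}^{\,m}=0$. Integrating over $r{\mathbb{D}}$ and applying the Cauchy-Green theorem to the exact part converts it into a boundary integral over $|z|=r$, while $|f|^{2m}h'\in L^1({\mathbb{D}})$ since $f\in H^p\subset A^p$. Letting $r\to1$ and using that $|f(re^{i\theta})|^{2m}\to|f(e^{i\theta})|^{2m}$ in $L^1({\mathbb{T}})$, I find that the principal value exists and equals
\[
\frac1m\left(\frac1{2\pi}\int_0^{2\pi}|f(e^{i\theta})|^{2m}h(e^{i\theta})e^{-i\theta}\,d\theta - \int_{{\mathbb{D}}}|f|^{2m}h'\,d\sigma\right).
\]

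For the right-hand side set $P_n=S_nf$; each right-hand integral is absolutely convergent because $P_n'$ is bounded and $|f|^{2m-1}\in L^1({\mathbb{D}})$. Using $f^{m-1}P_n' = P_n^{m-1}P_n' + (f^{m-1}-P_n^{m-1})P_n'$ and $P_n^{m-1}P_n'=\tfrac1m\frac{\partial}{\partial z}(P_n^m)$, I split
\[
\int_{{\mathbb{D}}}f^{m-1}\conj{f}^{\,m}P_n'h\,d\sigma = \frac1m\int_{{\mathbb{D}}}\frac{\partial}{\partial z}(P_n^m)\,\conj{f}^{\,m}h\,d\sigma + E_n, \qquad E_n=\int_{{\mathbb{D}}}\big(f^{m-1}-P_n^{m-1}\big)\conj{f}^{\,m}P_n'h\,d\sigma.
\]
Running the Cauchy-Green computation of the previous paragraph on the first term (with $P_n^m$ now in the holomorphic factor) gives
\[
\frac1m\left(\frac1{2\pi}\int_0^{2\pi}P_n^m\conj{f}^{\,m}he^{-i\theta}\,d\theta - \int_{{\mathbb{D}}}P_n^m\conj{f}^{\,m}h'\,d\sigma\right).
\]
Since $S_nf\to f$ in $H^p$ we have $P_n^m\to f^m$ in $H^2$, hence in $A^2$ and in $L^2({\mathbb{T}})$, so this tends to the closed form found above. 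Thus the lemma reduces to showing $E_n\to0$.

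Proving $E_n\to0$ is the main obstacle. The decisive observation is that $f^{m-1}$ and $P_n^{m-1}=(S_nf)^{m-1}$ share all Taylor coefficients through order $n$: because $f-S_nf$ has only frequencies exceeding $n$, so does
\[
f^{m-1}-(S_nf)^{m-1} = (f-S_nf)\sum_{i=0}^{m-2}f^i(S_nf)^{m-2-i}.
\]
Hence the holomorphic function $\Psi_n:=(f^{m-1}-P_n^{m-1})P_n'h$ has only frequencies exceeding $n$, so it is orthogonal in $A^2$ to every polynomial of degree at most $n$; in particular $\langle\Psi_n,S_n(f^m)\rangle_{A^2}=0$, where $\langle G,H\rangle_{A^2}=\int_{{\mathbb{D}}}G\conj{H}\,d\sigma$. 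Therefore
\[
E_n=\langle\Psi_n,f^m\rangle_{A^2}=\langle\Psi_n,\,f^m-S_n(f^m)\rangle_{A^2}.
\]
Here $\|f^m-S_n(f^m)\|_{A^2}\to0$ because $f^m\in A^2$ and Taylor sections converge in $A^2$, so by the Cauchy-Schwarz inequality it suffices to prove the uniform bound $\sup_n\|\Psi_n\|_{A^2}<\infty$. This is the genuinely hard point and the one requiring Littlewood-Paley theory: the boundary norm $\|\Psi_n\|_{H^2}$ is \emph{not} bounded (the $L^2({\mathbb{T}})$ norms of $(S_nf)'$ blow up), so the estimate must be carried out at the level of area integrals, where the radial integration offsets the growth of $(S_nf)'$. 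I would obtain it from Littlewood-Paley/Hardy-Littlewood derivative estimates for the partial sums $S_nf$, uniform in $n$, combined with the high-frequency vanishing of $f^{m-1}-P_n^{m-1}$; this is also the step that relies essentially on $p$ being an even integer.
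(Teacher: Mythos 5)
Your setup is sound as far as it goes: the factorization $|f|^{p-1}\conj{\sgn f}=f^{m-1}\conj{f}^{\,m}$, the Cauchy-Green evaluations of both sides, the frequency observation about $f^{m-1}-(S_nf)^{m-1}$, and the resulting identity $E_n=\langle\Psi_n,\,f^m-S_n(f^m)\rangle_{A^2}$ are all correct. The fatal problem is the step you deferred: the uniform bound $\sup_n\|\Psi_n\|_{A^2}<\infty$ is not merely the ``genuinely hard point''---it is false, so no Littlewood-Paley or maximal-function argument can deliver it. Take $p=4$ (so $m=2$), $h\equiv 1$, and the lacunary function $f(z)=\sum_{s\ge 1}s^{-1}z^{2^s}$, which lies in $H^q$ for every $q<\infty$, in particular in $H^4$. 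For $n=2^t$,
\[
\Psi_n=(f-S_nf)(S_nf)'=\sum_{s>t}\,\sum_{1\le u\le t}\frac{2^u}{su}\,z^{2^s+2^u-1},
\]
and the exponents $2^s+2^u-1$ are pairwise distinct, so the monomials are orthogonal in $A^2$; already the single term $(s,u)=(t+1,t)$ gives
\[
\|\Psi_n\|_{A^2}^2\;\ge\;\Bigl(\frac{2^t}{t(t+1)}\Bigr)^{2}\frac{1}{2^{t+1}+2^t}\;=\;\frac{2^t}{3\,t^2(t+1)^2}\;\longrightarrow\;\infty.
\]
Area integration does not offset the growth of $(S_nf)'$ once it is paired against the merely-decaying factor $f^{m-1}-(S_nf)^{m-1}$: a product of two high-frequency factors carries no smallness by itself. (In this particular example $\|f^2-S_n(f^2)\|_{A^2}$ happens to decay like $t^{-1}2^{-t/2}$, so the Cauchy-Schwarz product still tends to zero; but your argument supplies no mechanism forcing such a cancellation of rates in general, and with the uniform bound gone the proof cannot conclude.)

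The paper avoids exactly this trap by never detaching the derivative of the partial sum from a factor that tends to zero. Its Lemma \ref{klb} is a trilinear estimate, $|\pv\int_{{\mathbb{D}}}\conj{f_1}f_2f_3'\,d\sigma|\le C\|f_1\|_{H^{p_1}}\|f_2\|_{H^{p_2}}\|f_3\|_{H^{p_3}}$, proved by applying the Littlewood-Paley bound (Theorem \ref{hardy_mult_int}) to the primitive $\int_0^zf_2f_3'\,d\zeta$ and then using Cauchy-Green and H\"older; since the bound is linear in the slot carrying the derivative, one puts $f_3=f-S_nf$ there and the convergence statement follows at once from $\|f-S_nf\|_{H^p}\to 0$. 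Lemma \ref{kleq} is then a one-line specialization: $f_1=f^{p/2}$, $f_2=f^{(p/2)-1}h$, $f_3=f$. Incidentally, your $E_n$ can be handled the same way---apply Lemma \ref{klb} with $f_1=f^m$, $f_2=\bigl(f^{m-1}-(S_nf)^{m-1}\bigr)h$, $f_3=S_nf$, using $\sup_n\|S_nf\|_{H^{2m}}<\infty$ and $\|f^{m-1}-(S_nf)^{m-1}\|_{H^{2m/(m-1)}}\to0$---but at that point you have reconstructed the paper's lemma, and the orthogonality detour through $\Psi_n$ becomes unnecessary.
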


\begin{lemma}\label{klb}
Suppose that $1 < p_1 < \infty$ and $1<p_2,p_3 \le \infty$, 
and also that   
$$1 = \frac{1}{p_1} + \frac{1}{p_2} + \frac{1}{p_3}.$$
Let $f_1 \in H^{p_1}$, $f_2 \in H^{p_2}$, and $f_3 \in H^{p_3}.$  Then 
\begin{equation*}
\left|\pv \int_{{\mathbb{D}}} \conj{f_1} f_2 f_3' \, d\sigma \right| \le C 
\|f_1\|_{H^{p_1}} \|f_2\|_{H^{p_2}} \|f_3\|_{H^{p_3}} 
\end{equation*}
where $C$ depends only on $p_1$ and $p_2$. 
(Implicit is the claim that the principal 
value exists.)  Moreover, if $p_3 < \infty$, then
\begin{equation*}
\pv \int_{{\mathbb{D}}} \conj{f_1} f_2 f_3'\, d\sigma = 
\lim_{n\rightarrow\infty} \int_{{\mathbb{D}}} \conj{f_1} f_2 (S_n f_3)'\, d\sigma.
\end{equation*}

\end{lemma}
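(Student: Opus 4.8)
The plan is to collapse the trilinear principal--value integral into a single boundary pairing and then estimate that pairing by Littlewood--Paley methods.

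First I would fix $r<1$ and expand $f_1=\sum a_j z^j$, $f_2=\sum b_k z^k$, $f_3=\sum c_l z^l$. Since these series converge uniformly on $r\overline{{\mathbb{D}}}$, integrating term by term over circles retains only the frequency-matched terms and yields
\[
\int_{r{\mathbb{D}}}\conj{f_1}f_2 f_3'\,d\sigma=\sum_{k,l}\conj{a_{k+l-1}}\,b_k\,\frac{l}{k+l}\,c_l\,r^{2(k+l)}.
\]
Reindexing by $m=k+l-1$ shows this equals $r^2$ times the coefficient pairing of $f_1$ against $R(z)=\tfrac1z\int_0^z f_2(w)f_3'(w)\,dw$; that is,
\[
\int_{r{\mathbb{D}}}\conj{f_1}f_2 f_3'\,d\sigma=\frac{r^2}{2\pi}\int_0^{2\pi}R(re^{i\theta})\,\conj{f_1(re^{i\theta})}\,d\theta .
\]
The point of introducing $R$ is that the multiplier $l/(k+l)$ reveals that the derivative on $f_3$ is exactly compensated by one integration, so $R$ ought to behave like a \emph{product} rather than a derivative.

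The heart of the proof is then to show $R\in H^{p_1'}$, where $1/p_1'=1/p_2+1/p_3$, with $\|R\|_{H^{p_1'}}\le C\|f_2\|_{H^{p_2}}\|f_3\|_{H^{p_3}}$ and $C=C(p_1,p_2)$. Writing $I(z)=\int_0^z f_2 f_3'$, so that $R=I/z$ and $\|R\|_{H^{p_1'}}=\|I\|_{H^{p_1'}}$ (since $I(0)=0$ and $|1/z|=1$ on the circle), I would invoke the Lusin area-function characterization $\|I\|_{H^{p_1'}}\approx |I(0)|+\|S(I)\|_{L^{p_1'}}$. Because $I'=f_2 f_3'$ and $|f_2(z)|\le (f_2)^*(\theta)$ on the Stolz cone $\Gamma(\theta)$, I obtain the pointwise domination $S(I)(\theta)\le (f_2)^*(\theta)\,S(f_3)(\theta)$ with $(f_2)^*$ the nontangential maximal function. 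Hölder in $\theta$ with exponents $p_2,p_3$ (legitimate since $1/p_2+1/p_3=1/p_1'$), together with $\|(f_2)^*\|_{L^{p_2}}\lesssim\|f_2\|_{H^{p_2}}$ and $\|S(f_3)\|_{L^{p_3}}\lesssim\|f_3\|_{H^{p_3}}$, then gives the claim; the constant depends only on $p_1,p_2$ because $p_3$ and $p_1'$ are determined by them. This is the step I expect to be the main obstacle: arranging the square-function inequalities so the derivative sits on the correct factor and controlling the borderline exponents. At most one of $p_2,p_3$ can equal $\infty$ (both would force $p_1=1$), so when $p_3=\infty$ I would first integrate by parts, $I=f_2f_3-f_2(0)f_3(0)-\int_0^z f_2' f_3$, moving the derivative onto $f_2$ so that $S$ is applied to the factor with finite exponent.

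With this estimate in hand the rest follows by letting $r\to1$. Since $p_1'<\infty$ and $p_1<\infty$, the dilations give $R(re^{i\theta})\to R(e^{i\theta})$ in $L^{p_1'}$ and $f_1(re^{i\theta})\to f_1(e^{i\theta})$ in $L^{p_1}$, so the pairing above converges, the principal value exists, and
\[
\pv\int_{{\mathbb{D}}}\conj{f_1}f_2 f_3'\,d\sigma=\frac{1}{2\pi}\int_0^{2\pi}R(e^{i\theta})\conj{f_1(e^{i\theta})}\,d\theta,
\]
which by Hölder is bounded by $\|R\|_{H^{p_1'}}\|f_1\|_{H^{p_1}}$ and hence by the asserted product of norms. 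For the moreover statement ($p_3<\infty$) I would apply the inequality just proved to the triple $(f_1,f_2,f_3-S_nf_3)$: since $S_nf_3\to f_3$ in $H^{p_3}$,
\[
\left|\pv\int_{{\mathbb{D}}}\conj{f_1}f_2(f_3-S_nf_3)'\,d\sigma\right|\le C\|f_1\|_{H^{p_1}}\|f_2\|_{H^{p_2}}\|f_3-S_nf_3\|_{H^{p_3}}\to 0.
\]
As $S_nf_3$ is a polynomial, $\conj{f_1}f_2(S_nf_3)'\in L^1(d\sigma)$ (Hölder, using $H^p\subset A^p$), so its principal value is the ordinary integral; combined with linearity of the principal value this yields $\pv\int=\lim_n\int_{{\mathbb{D}}}\conj{f_1}f_2(S_nf_3)'\,d\sigma$.
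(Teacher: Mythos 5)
Your proposal is correct and follows essentially the same route as the paper: both reduce the trilinear form to a circle pairing of $f_1$ against the primitive $\int_0^z f_2 f_3'\,d\zeta$, prove that this primitive lies in $H^{p_1'}$ with norm at most $C\|f_2\|_{H^{p_2}}\|f_3\|_{H^{p_3}}$ by dominating its square function pointwise by the maximal function of $f_2$ times the square function of $f_3$ and applying H\"older (with integration by parts at the endpoint where the derivative falls on the $H^\infty$ factor), and then let $r\to 1$ and deduce the ``moreover'' statement by applying the bound to $f_3-S_nf_3$. The only differences are cosmetic: you derive the pairing identity by power series rather than by the Cauchy--Green theorem, you use the Lusin area function and nontangential maximal function in place of the paper's radial Littlewood--Paley $g$-function and radial maximal function (the paper's Theorem 5.5), and you obtain convergence of the integrals over $r{\mathbb{D}}$ directly from norm convergence of dilations rather than via a Cauchy criterion.
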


\section{The Norm-Equality}
Let $p$ be an even integer and let $q$ be its conjugate exponent.
Let $k \in H^q$ and let $F$ be the extremal function for $k$ over $A^p$.  
We will denote by $\phi$ the functional associated with $k$. Let 
$F_n$ be the extremal function for $k$ when the extremal problem is posed 
over $P_n$, the space of polynomials of degree at most $n$.  Also, let
\begin{equation}\label{K_eq}
K(z) = \frac{1}{z} \int_0^z k(\zeta)\, d\zeta, 
\end{equation}
so that $(zK)' = k.$  
During proof of Ryabykh's 
theorem in \cite{me}, an important step is to show that 
\begin{equation*}
\frac{1}{2\pi} \int_0^{2\pi} |F_n(e^{i\theta})|^p d\theta = 
\frac{1}{2\pi \|\phi_{|P_n}\|} \int_{0}^{2 \pi}
F_n \left[\left(\frac{p}{2}\right)\conj{k} +\left(1-\frac{p}{2}\right) \conj{K}\right]\,d\theta,
\end{equation*}
(see \cite{me}, p. 2652). 
We will now derive a similar result for $F$:
\begin{theorem}\label{norm_formula_ext}
Let $p$ be an even integer, let $k \in H^q$, 
and let $F\in A^p$ be the extremal function for $k.$  Then 
\begin{equation*}
\frac{1}{2\pi} \int_0^{2\pi} |F(e^{i\theta})|^p h(e^{i\theta}) d\theta = 
\frac{1}{2\pi \|\phi\|} \int_{0}^{2 \pi}
F \left[\left(\frac{p}{2}\right)h\conj{k} +\left(1-\frac{p}{2}\right) (zh)'\conj{K}\right]\,d\theta,
\end{equation*}
for every polynomial $h$.
\end{theorem}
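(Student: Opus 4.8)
The plan is to reduce everything to powers of $F$ and $\conj{F}$ using the even-integer hypothesis, and then to move between boundary integrals and area integrals by the Cauchy--Green theorem, using the integral extremal condition (Theorem~\ref{integral_extremal_condition}) to bring the kernel $k$ into play. Write $m=p/2$, so that $|F|^{p-1}\conj{\sgn F}=F^{m-1}\conj{F}^{\,m}$. Since $k\in H^q$, Ryabykh's theorem gives $F\in H^p$, hence $F^m\in H^2$, so all the boundary integrals below converge absolutely by H\"older's inequality. The starting identity is the extremal condition in the form $\int_{\mathbb{D}} g\,F^{m-1}\conj{F}^{\,m}\,d\sigma=\phi(g)/\|\phi\|=\frac{1}{\|\phi\|}\int_{\mathbb{D}} g\,\conj{k}\,d\sigma$ for all $g\in A^p$; taking $g=Fh$ yields the Bergman-level relation $\int_{\mathbb{D}}|F|^p h\,d\sigma=\frac{1}{\|\phi\|}\int_{\mathbb{D}} Fh\,\conj{k}\,d\sigma$.

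Next I would convert these area integrals into the boundary integrals of the statement. The clean tool is that for analytic $a,b$ one has $\partial_{\bar z}\!\big(a\,\conj{\int_0^z b}\big)=a\conj{b}$, so the Cauchy--Green theorem gives
\[
\int_{\mathbb{D}} a\conj{b}\,d\sigma=\frac{1}{2\pi}\int_0^{2\pi} a\,z\,\conj{\textstyle\int_0^z b}\,d\theta .
\]
Applying this with $a=Fh$ and $b=k$, and using $\int_0^z k=zK$ (so that $z\,\conj{\int_0^z k}=|z|^2\conj{K}=\conj{K}$ on $|z|=1$), turns the Bergman relation into $\int_{\mathbb{D}}|F|^p h\,d\sigma=\frac{1}{2\pi\|\phi\|}\int_0^{2\pi} Fh\,\conj{K}\,d\theta$. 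The target formula, however, is stated for the \emph{boundary} integral $\frac{1}{2\pi}\int_0^{2\pi}|F|^p h\,d\theta$, so I would separately rewrite that quantity by Cauchy--Green as an area integral of the form $m\,\pv\int_{\mathbb{D}} z^{-1}F^m\conj{F}^{\,m-1}\conj{F'}\,h\,d\sigma$, carefully accounting for the singularity at the origin and the value there. Integrating by parts and re-applying the extremal condition to this $F'$-integral introduces $\conj{k}$, while the relation $\conj{k}-\conj{K}=\bar z\,\conj{K'}$ together with the algebraic identity $(1-m)(zh)'-h=-mh+(1-m)zh'$ is what splits the contribution into the two terms with weights $m=\tfrac p2$ and $1-m=1-\tfrac p2$ and produces the factor $(zh)'$ attached to $\conj{K}$.

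The genuine obstacle is analytic rather than algebraic: because $F$ lies only in $H^p$, the integrals involving $F'$ are merely conditionally convergent, and the Cauchy--Green manipulations are not literally valid for $F$ itself. The remedy is to carry out every such computation with $F$ replaced by its Taylor polynomials $S_n F$, for which all integrands are smooth on $\overline{\mathbb{D}}$ and the classical Cauchy--Green theorem applies, and then to pass to the limit $n\to\infty$. Lemma~\ref{kleq} justifies this passage for the integral $\pv\int_{\mathbb{D}}|F|^{p-1}\conj{\sgn F}\,F'h\,d\sigma$, and Lemma~\ref{klb} both guarantees that the relevant principal values exist and evaluates them as limits of the polynomial integrals $\int_{\mathbb{D}}\conj{f_1}f_2(S_n f_3)'\,d\sigma$; note that $|F|^{p-1}\conj{\sgn F}\,F'=\tfrac1m\conj{F}^{\,m}(F^m)'$ has exactly the shape $\conj{f_1}f_2 f_3'$ to which these lemmas apply. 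The even-integer assumption is essential here, since it is what makes $|F|^{p-1}\conj{\sgn F}$ a polynomial in $F$ and $\conj{F}$ and thereby brings the whole computation within the scope of the two lemmas. Assembling the limits and matching the constants (including the factor at the origin) then delivers the stated formula for every polynomial $h$.
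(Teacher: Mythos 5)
Your first two steps are sound and consistent with the paper's strategy: the extremal condition with test function $Fh$ gives $\int_{\mathbb{D}}|F|^p h\,d\sigma=\frac{1}{\|\phi\|}\int_{\mathbb{D}}Fh\conj{k}\,d\sigma$, and Cauchy--Green converts the right-hand side into $\frac{1}{2\pi\|\phi\|}\int_0^{2\pi}Fh\conj{K}\,d\theta$. The genuine gap is in your pivotal third step. Writing $m=p/2$ as you do: when you expand the boundary integral $\frac{1}{2\pi}\int_0^{2\pi}|F|^p h\,d\theta$ by Cauchy--Green with the weight $z^{-1}$ (the $dz$, $\partial/\partial\conj{z}$ orientation), the derivative falls on the \emph{anti-holomorphic} factor, producing $m\,\pv\int_{\mathbb{D}}z^{-1}hF^m\conj{F}^{m-1}\conj{F'}\,d\sigma$ plus the origin term. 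But Theorem \ref{integral_extremal_condition} only evaluates integrals of the form $\int_{\mathbb{D}}g\,|F|^{p-1}\conj{\sgn F}\,d\sigma=\int_{\mathbb{D}}g\,F^{m-1}\conj{F}^m\,d\sigma$ with $g$ holomorphic and in $A^p$; in your integral the conjugated block is $\conj{F}^{m-1}\conj{F'}=\tfrac1m\conj{(F^m)'}$, not $\conj{F}^m$, and $\conj{F'}$ cannot be absorbed into a holomorphic test function. Lemmas \ref{kleq} and \ref{klb} do not help either: they treat $\pv\int\conj{f_1}f_2f_3'\,d\sigma$, where the \emph{unconjugated} derivative sits on a holomorphic factor. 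Your closing observation that $|F|^{p-1}\conj{\sgn F}\,F'=\tfrac1m\conj{F}^m(F^m)'$ has the shape $\conj{f_1}f_2f_3'$ is correct, but that is the integrand of the decomposition you did not take; the integrand you actually produced is $z^{-1}hF^m\conj{(F^m)'}$, with the conjugate on the derivative, and the two are not interchangeable.

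Moreover, the proposed repair by ``integrating by parts'' is circular. On the annulus $\epsilon<|z|<r$ the cofactor $u=z^{-1}hF^m$ is holomorphic, so the only available move, $\int u\,\partial_{\conj{z}}v\,dA=\frac{1}{2i}\int_{\partial\Omega}uv\,dz-\int v\,\partial_{\conj{z}}u\,dA$ with $v=\conj{F}^m$, has $\partial_{\conj{z}}u=0$ and merely reproduces the boundary integral you started from; no manipulation of this integral ever reaches the extremal condition. The correct move --- the paper's --- is the opposite orientation of Cauchy--Green with weight $z$ rather than $z^{-1}$: $\frac{i}{2}\int_{\partial(r\mathbb{D})}|F|^p zh\,d\conj{z}=\int_{r\mathbb{D}}\partial_z\bigl(|F|^p zh\bigr)\,dA$, so that the derivative lands on the holomorphic factor, $\partial_z\bigl(|F|^p zh\bigr)=\bigl[(zh)'F+\tfrac{p}{2}zhF'\bigr]F^{m-1}\conj{F}^m$. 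This is (holomorphic)$\times|F|^{p-1}\conj{\sgn F}$, involves no singularity and no origin term, and Theorem \ref{integral_extremal_condition} applies once Lemma \ref{kleq} (replacing $F'$ by $(S_nF)'$) handles the fact that $zhF'$ need not lie in $A^p$. From that point on, your remaining ingredients --- reverse Cauchy--Green, the identity $k-K=zK'$, and $S_nF\to F$ in $H^p$ --- do assemble into the stated formula, essentially as in the paper; but as written, your middle step is a dead end, not a detour.
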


\begin{proof}
Since Ryabykh's theorem says that $F\in H^p$, we have 
\begin{equation*}
\frac{1}{2\pi} \int_0^{2\pi} |F(e^{i\theta})|^p h(e^{i\theta}) d\theta = 
\lim_{r\rightarrow 1} \frac{i}{2\pi} \int_{\partial (r{\mathbb{D}})} |F(z)|^p h(z)z\, d\conj{z},
\end{equation*}
where $h$ is any polynomial. 
Apply the Cauchy-Green theorem to transform the right-hand side into
\begin{equation*}
\pv \frac{1}{\pi} \int_{{\mathbb{D}}} \left((zh)'F + \frac{p}{2}zhF'\right)|F|^{p-1}\conj{\sgn F}\,dA(z).
\end{equation*}
Invoking Lemma \ref{kleq} with $zh$ in place of $h$ shows that this limit equals 
\begin{equation*}
\lim_{n\rightarrow\infty} \frac{1}{\pi} 
\int_{{\mathbb{D}}} \left((zh)'F + \frac{p}{2}zh(S_n F)'\right)|F|^{p-1}\conj{\sgn F}\,dA(z).
\end{equation*}
Since $(zh)'F + \frac{p}{2}zh(S_n F)'$ is in $A^p$, 
we may apply 
Theorem \ref{integral_extremal_condition} to reduce the last expression 
to  
\begin{equation}\label{eq_normeq1}
\lim_{n\rightarrow\infty} \frac{1}{\pi \|\phi\|} 
\int_{{\mathbb{D}}} \left((zh)'F + \frac{p}{2}zh(S_n F)'\right) \conj{k} \,dA(z). 
\end{equation}
Recall that we have defined
 $K(z) = \tfrac{1}{z}\int_0^z k(\zeta)\, d\zeta.$
To prepare for a reverse application of the Cauchy-Green theorem, we rewrite 
the integral in \eqref{eq_normeq1} as
\begin{equation*}
\begin{split}
\frac{1}{\pi \|\phi\|} 
\int_{{\mathbb{D}}} \bigg[
\frac{\partial}{\partial \conj{z}}\left\{(zh)'F\conj{zK}\right\} &+ 
\frac{p}{2} \frac{\partial}{\partial z}\left\{zhS_n(F) \conj{k}\right\} \\
&- 
\frac{p}{2} \frac{\partial}{\partial \conj{z}}\left\{(zh)'S_n(F) \conj{zK}\right\} \bigg] dA(z).
\\
\end{split}
\end{equation*}
Now this equals
\begin{equation*}
\begin{split}
\lim_{r\rightarrow 1} \frac{1}{\pi\|\phi\|} \int_{r{\mathbb{D}}} \bigg[
\frac{\partial}{\partial \conj{z}}\left\{(zh)'F\conj{zK}\right\} &+ 
\frac{p}{2} \frac{\partial}{\partial z}\left\{zhS_n(F) \conj{k}\right\} \\  
&- \frac{p}{2} \frac{\partial}{\partial \conj{z}}\left\{(zh)'S_n(F) \conj{zK}\right\} \bigg] dA(z).
\\
\end{split}
\end{equation*}
We apply the Cauchy-Green theorem to show that this equals
\begin{equation*}\begin{split}
\lim_{r \rightarrow 1} \Biggl[ 
\frac{1}{2\pi i \|\phi\|} \int_{\partial (r{\mathbb{D}})} (zh)'F \conj{zK}\, dz &+ 
\frac{ip}{4\pi  \|\phi\|} \int_{\partial (r{\mathbb{D}})} zh S_n(F) \conj{k}\, d\conj{z} 
\\ &- 
\frac{p}{4\pi i \|\phi\|} \int_{\partial (r{\mathbb{D}})} (zh)'S_n(F) \conj{zK}\, dz \Biggr]. \\
\end{split}\end{equation*}
Since $F$ is in $H^p$ and both $k$ and $K$ are in $H^q$, the above limit 
equals
\begin{equation*}
\begin{split} 
&\frac{1}{2\pi i \|\phi\|} \int_{\partial {\mathbb{D}}} (zh)'F \conj{zK}\, dz + 
\frac{ip}{4\pi  \|\phi\|} \int_{\partial {\mathbb{D}}} zh S_n(F) \conj{k}\, d\conj{z}\\
&\qquad \qquad - 
\frac{p}{4\pi i \|\phi\|} \int_{\partial {\mathbb{D}}} (zh)'S_n(F) \conj{zK}\, dz 
\\
&= \frac{1}{2\pi\|\phi\|} 
\int_0^{2\pi} (zh)'F \conj{K} + S_n(F)\left(\frac{p}{2}h\conj{k} - \frac{p}{2}(zh)'\conj{K}\right) d\theta. 
\\
\end{split}
\end{equation*}
We let $n \rightarrow \infty$ in the above expression to reach the 
desired conclusion. 
\end{proof}

Taking $h=1$, we have the following corollary, which we call the 
``norm-equality''.
\begin{corollary}\label{norm_equality}{\rm\bf (The Norm-Equality).}
Let $p$ be an even integer, let $k \in H^q$, and let $F$ be the extremal 
function for $k.$  Then 
\begin{equation*}
\frac{1}{2\pi} \int_0^{2\pi} |F(e^{i\theta})|^p d\theta = 
\frac{1}{2\pi \|\phi\|} \int_{0}^{2 \pi}
F \left[\left(\frac{p}{2}\right)\conj{k} +\left(1-\frac{p}{2}\right) \conj{K}\right]\,d\theta.
\end{equation*}
\end{corollary}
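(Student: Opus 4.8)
The plan is to obtain the Norm-Equality as an immediate specialization of Theorem \ref{norm_formula_ext}, since all of the analytic work---the boundary-value representation of $F \in H^p$, the two applications of the Cauchy-Green theorem, and the appeals to Lemma \ref{kleq} and to Theorem \ref{integral_extremal_condition}---has already been carried out there for an \emph{arbitrary} polynomial $h$. The only task that remains is to choose $h$ correctly.

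First I would take $h$ to be the constant polynomial $1$, which is an admissible choice since it is a polynomial, so the hypothesis of Theorem \ref{norm_formula_ext} is satisfied. With this choice the left-hand side of the identity in that theorem reduces at once to $\frac{1}{2\pi}\int_0^{2\pi} |F(e^{i\theta})|^p\,d\theta$, because $h(e^{i\theta}) = 1$.

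Next I would simplify the right-hand side. Since $h = 1$, the first term becomes $\frac{p}{2}F\conj{k}$; for the second term the only computation needed is the derivative $(zh)' = (z)' = 1$, so that $\left(1-\frac{p}{2}\right)(zh)'\conj{K}$ collapses to $\left(1-\frac{p}{2}\right)\conj{K}$. Substituting these into the formula of Theorem \ref{norm_formula_ext} produces precisely the stated identity. There is essentially no obstacle here: the entire content of the corollary resides in the theorem, and the passage to $h = 1$ is a one-line verification whose only delicate point is the trivial observation that the constant function is a polynomial.
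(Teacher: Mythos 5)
Your proposal is correct and is exactly the paper's own proof: the corollary is stated immediately after Theorem \ref{norm_formula_ext} with the one-line justification ``Taking $h=1$,'' which is precisely your specialization, including the observation that $(zh)' = 1$ when $h \equiv 1$. Nothing further is needed.
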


The norm-equality is useful mainly because it yields the following 
theorem. 
\begin{theorem}\label{thm_cont}
Let $p$ be an even integer. 
Let $\{k_n\}$ be a sequence of $H^q$ functions, and let 
$k_n \rightarrow k$ in $H^q$.
Let $F_n$ be the $A^p$ extremal function for $k_n$ and let 
$F$ be the $A^p$ extremal function for $k.$  Then $F_n \rightarrow F$ in $H^p.$
\end{theorem}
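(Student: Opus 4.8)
The plan is to show that $F_n \to F$ weakly in $H^p$ and that $\|F_n\|_{H^p} \to \|F\|_{H^p}$; since $H^p$ is uniformly convex for $1<p<\infty$, and hence has the Radon--Riesz property, these two facts together force $F_n \to F$ in $H^p$-norm. The whole argument is organized around using Corollary \ref{norm_equality} to convert the nonlinear quantity $\|F_n\|_{H^p}^p$ into a bilinear pairing that is amenable to a weak-times-strong limit.

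First I would record the elementary reductions. Writing $\phi_n,\phi$ for the functionals with kernels $k_n,k$, the left inequality in \eqref{A_q_isomorphism} together with $\|\cdot\|_{A^q}\le\|\cdot\|_{H^q}$ gives $\|\phi_n-\phi\|_{(A^p)^*}\le\|k_n-k\|_{A^q}\le\|k_n-k\|_{H^q}\to 0$, so $\phi_n\to\phi$ in $(A^p)^*$ and in particular $\|\phi_n\|\to\|\phi\|\neq 0$. A standard uniform-convexity argument for extremal functions (cf.\ \cite{me}) then yields $F_n\to F$ in $A^p$, and since the Taylor-coefficient functionals are bounded on $A^p$, the coefficients of $F_n$ converge to those of $F$. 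Next, because $k_n\to k$ in $H^q$ we also have $k_n\to k$ in $A^q$, so $\|k_n\|_{A^q}\to\|k\|_{A^q}>0$; Ryabykh's theorem then bounds $\|F_n\|_{H^p}$ by a quantity involving $\|k_n\|_{H^q}/\|k_n\|_{A^q}$, which stays bounded, so $\{F_n\}$ is bounded in $H^p$. A bounded sequence in the reflexive space $H^p$ has weak subsequential limits, and each such limit has the same Taylor coefficients as $F$ by the previous step; hence the full sequence satisfies $F_n\rightharpoonup F$ weakly in $H^p$.

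To pass to the limit in the norm-equality I need the auxiliary functions $K_n(z)=z^{-1}\int_0^z k_n$ to converge to $K$ in $H^q$. Writing $K(z)=\int_0^1 k(tz)\,dt$ and applying Minkowski's integral inequality shows that the linear map $k\mapsto K$ has norm at most $1$ on $H^q$, so $\|K_n-K\|_{H^q}\le\|k_n-k\|_{H^q}\to 0$. I then apply Corollary \ref{norm_equality} to each pair $(F_n,k_n)$:
$$\frac{1}{2\pi}\int_0^{2\pi}|F_n|^p\,d\theta=\frac{1}{2\pi\|\phi_n\|}\int_0^{2\pi}F_n\Big[\tfrac{p}{2}\conj{k_n}+\big(1-\tfrac{p}{2}\big)\conj{K_n}\Big]\,d\theta.$$
The right-hand integral is the $(H^p,H^q)$ pairing of $F_n$ against $\tfrac{p}{2}k_n+(1-\tfrac{p}{2})K_n$; since $F_n\rightharpoonup F$ weakly, $\{F_n\}$ is bounded, and the second factor converges in $H^q$, the weak-times-strong principle makes this pairing converge to the pairing of $F$ against $\tfrac{p}{2}k+(1-\tfrac{p}{2})K$. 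Combining this with $\|\phi_n\|\to\|\phi\|$ and a second application of Corollary \ref{norm_equality} to $(F,k)$ gives $\|F_n\|_{H^p}^p\to\|F\|_{H^p}^p$, hence $\|F_n\|_{H^p}\to\|F\|_{H^p}$. With weak convergence and convergence of norms in hand, the Radon--Riesz property of $H^p$ finishes the proof.

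The main obstacle is precisely the middle step: turning the soft $A^p$-convergence and the Ryabykh bound into genuine $H^p$ information. Weak convergence by itself yields only the lower-semicontinuity estimate $\|F\|_{H^p}\le\liminf\|F_n\|_{H^p}$; the norm-equality is the essential device that supplies the reverse inequality, because it is the only tool available that expresses $\|F_n\|_{H^p}^p$ linearly in $F_n$ against a kernel that converges strongly in $H^q$.
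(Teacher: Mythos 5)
Your proposal is correct and follows essentially the same route as the paper's own proof: weak convergence in $H^p$ (via the Ryabykh bound, reflexivity, and the $A^p$-convergence $F_n \to F$ from \cite{me}), norm convergence via passing to the limit in the norm-equality using $K_n \to K$ in $H^q$ and $\|\phi_n\| \to \|\phi\|$, and then uniform convexity (Radon--Riesz) to upgrade to strong convergence. The only cosmetic differences are that you identify the weak limit through Taylor-coefficient functionals where the paper uses point evaluations, and you prove $\|K_n - K\|_{H^q} \le \|k_n - k\|_{H^q}$ directly by Minkowski's integral inequality where the paper cites \cite{me}.
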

Note that Ryabykh's theorem shows that each $F_n \in H^p$, 
and that $F \in H^p$.  
But because the operator taking a kernel to its extremal function is not 
linear, one cannot apply the closed graph theorem to conclude
that $F_n \rightarrow F$.  

To prove Theorem \ref{thm_cont} we will use the following lemma involving the 
notion of uniform convexity. A Banach 
space $X$ is called {\it uniformly convex} if 
for each $\epsilon > 0$, there is a $\delta > 0$ such that for all 
$x,y \in X$ with $\|x\| = \|y\| = 1,$ 
$$\left\| \tfrac12(x+y) \right\| > 1 - \delta \qquad \text{ implies } 
\qquad 
\|x-y\| < \epsilon.
$$
An equivalent definition is that if 
$\{x_n\}$ and $\{y_n\}$ are sequences in $X$ such that 
$\|x_n\| = \|y_n\| = 1$ for all $n$ and $\|x_n + y_n\| \rightarrow 2$ 
then $\|x_n - y_n\| \rightarrow 0$. 
This concept was introduced by Clarkson in \cite{Clarkson}.  See also 
\cite{me},  where it is applied to extremal problems. 
To apply the lemma,
we use the fact that the space $H^p$ is uniformly convex for $1<p<\infty.$ 
By $x_n \rightharpoonup x$, we mean that
$x_n$ approaches $x$ weakly.
\begin{lemma}\label{uc_convergence_lemma}
Suppose that $X$ is a uniformly convex Banach space, that $x \in X,$ and 
that $\{x_n\}$ is a sequence of elements of $X$.  If $x_n \rightharpoonup x$ 
and $\|x_n\| \rightarrow \|x\|$, then $x_n \rightarrow x$ in $X$.
\end{lemma}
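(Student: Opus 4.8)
The plan is to prove this classical Radon--Riesz type property by reducing to unit vectors and then forcing the hypothesis of uniform convexity. First I would dispose of the trivial case $x = 0$: then $\|x_n\| \to \|x\| = 0$ gives $x_n \to 0 = x$ directly. So assume $x \neq 0$. Since $\|x_n\| \to \|x\| > 0$, we have $\|x_n\| \neq 0$ for all large $n$, and I may discard the finitely many terms where it vanishes. Set $y_n = x_n/\|x_n\|$ and $y = x/\|x\|$, so that $\|y_n\| = \|y\| = 1$. For any $\psi \in X^*$ we have $\psi(y_n) = \psi(x_n)/\|x_n\| \to \psi(x)/\|x\| = \psi(y)$, using $x_n \rightharpoonup x$ together with $\|x_n\| \to \|x\|$; hence $y_n \rightharpoonup y$. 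The goal is now to show $y_n \to y$ strongly, after which the conclusion for $x_n$ follows by undoing the normalization.

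The central step is to show $\|y_n + y\| \to 2$, which is where the weak convergence does its work. By the Hahn--Banach theorem there is a norming functional $\psi \in X^*$ with $\|\psi\| = 1$ and $\psi(y) = \|y\| = 1$. By weak convergence $\psi(y_n) \to \psi(y) = 1$, so $\psi(y_n + y) = \psi(y_n) + 1 \to 2$. Since $\|\psi\| = 1$, we have $\|y_n + y\| \ge |\psi(y_n + y)|$ and the right-hand side tends to $2$; together with the triangle-inequality bound $\|y_n + y\| \le \|y_n\| + \|y\| = 2$, this squeezes $\|y_n + y\| \to 2$.

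Now I would invoke the sequential form of uniform convexity stated above: applying it to the sequence $\{y_n\}$ and the constant sequence equal to $y$, both of unit norm with $\|y_n + y\| \to 2$, yields $\|y_n - y\| \to 0$. Finally I undo the normalization by writing $x_n - x = \|x_n\|(y_n - y) + (\|x_n\| - \|x\|)\,y$, so that $\|x_n - x\| \le \|x_n\|\,\|y_n - y\| + \big|\,\|x_n\| - \|x\|\,\big| \to 0$, since $\|x_n\| \to \|x\|$ and $\|y_n - y\| \to 0$. This gives $x_n \to x$ in $X$. The only delicate point in the argument is the passage $\|y_n + y\| \to 2$; everything rests on producing the norming functional for $y$ and using it to detect, via the weak limit, that the midpoints of $y_n$ and $y$ approach the unit sphere, at which stage uniform convexity is precisely the tool that converts this into strong convergence.
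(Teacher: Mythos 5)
Your proof is correct. Note, however, that the paper does not actually prove this lemma: it simply records it as known and cites Exercise 15.17 of Hewitt and Stromberg, so there is no argument in the paper to compare yours against. What you have written is the standard Radon--Riesz argument, and every step checks out: the reduction to the case $x \neq 0$ and the normalization $y_n = x_n/\|x_n\|$, $y = x/\|x\|$ are legitimate since $\|x_n\| \to \|x\| > 0$; the passage $\psi(y_n) = \psi(x_n)/\|x_n\| \to \psi(x)/\|x\|$ correctly combines weak convergence with norm convergence; the Hahn--Banach norming functional yields $\|y_n + y\| \to 2$ by the squeeze between $|\psi(y_n+y)|$ and the triangle inequality; the sequential form of uniform convexity (exactly the formulation stated in the paper just before the lemma, applied with one sequence constant) gives $\|y_n - y\| \to 0$; and the identity $x_n - x = \|x_n\|(y_n - y) + (\|x_n\| - \|x\|)y$ undoes the normalization. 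Your write-up thus supplies a self-contained proof where the paper relies on an external reference, which is arguably a service to the reader; the only cost is length, and there is nothing to simplify or repair.
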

This lemma is known.  For example, it is contained in Exercise 15.17 in 
\cite{Hewitt_Stromberg}. 

\begin{proof}[Proof of Theorem]
We will first show  that 
$F_n \rightharpoonup F$ in $H^p$ (that is, $F_n$ converges to $F$ weakly in 
$H^p$).  Next we will use this fact and the norm-equality to show that 
$\|F_n\|_{H^p} \rightarrow \|F\|_{H^p}$. By the lemma, 
it will then follow that $F_n \rightarrow F$ in $H^p$. 

To prove that $F_n \rightharpoonup F$ in $H^p$, note that    
Ryabykh's theorem says that 
$\|F_n\|_{H^p} \le C (\|k_n\|_{H^q}/\|k_n\|_{A^q})^{1/(p-1)}.$  
Let $\alpha = \inf_n \|k_n\|_{A^q}$  
and $\beta =\sup_n \|k_n\|_{H^q}$.  
Here $\alpha > 0$ because by assumption none of the $k_n$ are identically 
zero, and they approach $k$, which is not identically $0$. 
Therefore 
$\|F_n\|_{H^p} \le C(\beta/\alpha)^{1/(p-1)}$, and the sequence 
$\{F_n\}$ is bounded in $H^p$ norm. 

Now, suppose that $F_n \not\rightharpoonup F.$ Then 
there is some $\psi \in (H^p)^*$ such that $\psi(F_n) \not \rightharpoonup\psi(F).$ 
This implies  
$|\psi(F_{n_j}) - \psi(F)| \ge \epsilon$ for some $\epsilon >0$ and some 
subsequence $\{F_{n_j}\}$. 
But since the sequence $\{F_n\}$ is bounded in $H^p$ norm, 
the Banach-Alaoglu theorem implies that some subsequence 
of $\{F_{n_j}\}$, which we will also denote by $\{F_{n_j}\}$, 
converges weakly in $H^p$ to some function 
$\widetilde{F}.$  Then $|\psi(\widetilde{F}) - \psi(F)| \ge \epsilon$.
Now $k_n \rightarrow k$ in $A^q$, and it is proved in \cite{me} that 
this implies $F_n \rightarrow F$ in $A^p$, which implies  
$F_n(z) \rightarrow F(z)$ for all $z \in {\mathbb{D}}.$ 
Since point evaluation is a bounded linear 
functional on $H^p,$ we have that  
$F_{n_j}(z) \rightarrow \widetilde{F}(z)$ for all $z\in {\mathbb{D}}$,
which means that $\widetilde{F}(z) = F(z)$ for all $z \in {\mathbb{D}}.$ 
 But this contradicts the assumption
that $\psi(\widetilde{F}) \ne \psi(F).$  Hence $F_n \rightharpoonup F$. 

Let $\phi_n$ be the functional with kernel $k_n$, and let $\phi$ be the 
functional with kernel $k$. 
To show that $\|F_n\|_{H^p} \rightarrow \|F\|_{H^p},$ recall that the 
norm-equality says
\begin{equation*}
\frac{1}{2\pi} \int_0^{2\pi} |F_n(e^{i\theta})|^p d\theta = 
\frac{1}{2\pi \|\phi_n\|} \int_{0}^{2 \pi}
F_n \left[\left(\frac{p}{2}\right)\conj{k_n} +\left(1-\frac{p}{2}\right) \conj{K_n}\right]\,d\theta.
\end{equation*}
But, if $h$ is any function analytic in ${\mathbb{D}}$ and $H(z)=(1/z)
\int_0^z h(\zeta)d\zeta$, it can be shown that 
$\|H\|_{H^q} \le \|h\|_{H^q}$ (see \cite{me}, proof of Theorem 4.2). 
Since $k_n \rightarrow k$ in 
$H^q$, it follows that $K_n \rightarrow K$ in $H^q$.
Also, $k_n \rightarrow k$ in $A^p$ implies that 
$\|\phi_n\| \rightarrow \|\phi\|$. In addition, 
$\|F_n\|_{H^p} \le C$ for some constant 
$C,$ and $F_n \rightharpoonup F,$ so the 
right-hand side of the above equation approaches 
\begin{equation*}
\frac{1}{2\pi \|\phi\|} \int_{0}^{2 \pi}
F \left[\left(\frac{p}{2}\right)\conj{k} +\left(1-\frac{p}{2}\right) \conj{K}\right]\,d\theta
= \frac{1}{2\pi} \int_0^{2\pi} |F(e^{i\theta})|^p d\theta.
\end{equation*}
In other words, $\|F_n\|_{H^p} \rightarrow \|F\|_{H^p},$ and so by 
Lemma \ref{uc_convergence_lemma} we conclude that 
$F_n \rightarrow F$ in $H^p$.  
\end{proof}

\section{Fourier Coefficients of $|F|^p$}

Theorem \ref{norm_formula_ext} can also be used to gain information about 
the Fourier coefficients of $|F|^p$, where $F$ is the extremal function.
In particular, it leads to a criterion for $F$ to be in $L^\infty$ in terms 
of the Taylor coefficients of the kernel $k$.

\begin{theorem}\label{fourier_fp}
Let $p$ be an even integer.  Let $k \in H^q$, let 
$F$ be the $A^p$ extremal function for $k$, and define 
$K$ by equation \eqref{K_eq}. Then for any integer $m \ge 0$, 
\begin{equation*}
\frac{1}{2\pi}\int_0^{2\pi} |F(e^{i\theta})|^p e^{im\theta} d\theta = 
\frac{1}{2\pi \|\phi\|} \int_0^{2\pi} Fe^{im\theta} 
\left[ \left(\frac{p}{2}\right) \conj{k} + \left( 1-\frac{p}{2}\right)(m+1)\conj{K}\right]
d\theta.
\end{equation*}
\end{theorem}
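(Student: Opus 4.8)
The plan is to specialize the general weighted formula of Theorem \ref{norm_formula_ext} to the monomial weight $h(z) = z^m$. That theorem asserts, for every polynomial $h$,
\begin{equation*}
\frac{1}{2\pi} \int_0^{2\pi} |F(e^{i\theta})|^p h(e^{i\theta})\, d\theta =
\frac{1}{2\pi \|\phi\|} \int_{0}^{2\pi}
F \left[\left(\tfrac{p}{2}\right)h\conj{k} + \left(1-\tfrac{p}{2}\right)(zh)'\conj{K}\right]\,d\theta,
\end{equation*}
and since $z^m$ is itself a polynomial, I am entitled to substitute it directly with no further justification. On the boundary ${\mathbb{T}}$ we have $z = e^{i\theta}$, so $h(e^{i\theta}) = e^{im\theta}$, which matches the exponential factor appearing on both sides of the claimed identity.

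The only computation required is to evaluate the derivative factor $(zh)'$ when $h(z)=z^m$. First I would compute $zh = z^{m+1}$, so that $(zh)' = (m+1)z^m$. Restricting to the boundary gives $(zh)'\big|_{z=e^{i\theta}} = (m+1)e^{im\theta}$. Substituting this together with $h(e^{i\theta}) = e^{im\theta}$ into the right-hand side of Theorem \ref{norm_formula_ext} yields
\begin{equation*}
\frac{1}{2\pi \|\phi\|} \int_0^{2\pi} F\left[\left(\tfrac{p}{2}\right)e^{im\theta}\conj{k} + \left(1-\tfrac{p}{2}\right)(m+1)e^{im\theta}\conj{K}\right] d\theta,
\end{equation*}
and factoring the common $e^{im\theta}$ out of the bracket produces exactly the expression claimed in the theorem. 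The left-hand side becomes $\frac{1}{2\pi}\int_0^{2\pi}|F(e^{i\theta})|^p e^{im\theta}\,d\theta$, as desired.

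I do not anticipate any genuine obstacle here, since this result is a direct corollary of Theorem \ref{norm_formula_ext}: all the analytic difficulty—the appeals to Ryabykh's theorem to place $F \in H^p$, the Cauchy-Green manipulations, and the passage to the limit via Lemma \ref{kleq}—has already been absorbed into the proof of the weighted formula. The one point deserving a word of care is simply that the substitution $h(z)=z^m$ is legitimate because $z^m$ is a polynomial for every integer $m \ge 0$, which is precisely the hypothesis under which Theorem \ref{norm_formula_ext} was stated. Thus the proof amounts to recording the specialization and simplifying, and I would present it in two or three lines.
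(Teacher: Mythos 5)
Your proposal is correct and is exactly the paper's own proof: the paper's argument consists of the single line ``Take $h(e^{i\theta}) = e^{im\theta}$ in Theorem \ref{norm_formula_ext},'' and your computation $(zh)' = (m+1)z^m$ just makes the resulting simplification explicit.
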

\begin{proof}
Take $h(e^{i\theta}) = e^{im\theta}$ in Theorem 
\ref{norm_formula_ext}. 
\end{proof}

This last formula can be applied to obtain estimates on the size of the 
Fourier coefficients of $|F|^p.$

\begin{theorem}\label{Fp_bound}
Let $p$ be an even integer. Let $k \in A^q$, and let $F$ be the $A^p$ extremal 
function for $k$.  Let 
\begin{equation*}
b_m = \frac{1}{2\pi}\int_0^{2\pi}|F(e^{i\theta})|^p e^{-im\theta} d\theta,
\end{equation*}
and let 
\begin{equation*}
k(z) = \sum_{n=0}^\infty c_n z^n.
\end{equation*}
Then, for each $m \ge 0,$
\begin{equation*}
|b_m| = |b_{-m}| \le \frac{p}{2\|\phi\|}  \|F\|_{H^2}
\left[ \sum_{n=m}^\infty |c_n|^2 \right]^{1/2}.
\end{equation*}
\end{theorem}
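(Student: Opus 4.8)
The plan is to read everything off the formula in Theorem \ref{fourier_fp} by expanding $F$, $k$, and $K$ into their Taylor series and then using orthogonality of the exponentials $e^{in\theta}$ on the circle. Since $|F|^p$ is real valued, its Fourier coefficients satisfy $b_{-m}=\conj{b_m}$, so $|b_m|=|b_{-m}|$ and it suffices to bound $|b_{-m}|$ for $m\ge 0$; this is exactly the quantity whose value Theorem \ref{fourier_fp} supplies.

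Before starting I would dispose of a degenerate case. If $\sum_{n=0}^\infty|c_n|^2=\infty$, then $\sum_{n=m}^\infty|c_n|^2=\infty$ for every $m\ge 0$ (deleting finitely many nonnegative terms cannot render a divergent series convergent), and the asserted inequality is trivially true. Otherwise $k\in H^2$, and because $p\ge 2$ forces $q\le 2$ we have $H^2\subseteq H^q$, so $k\in H^q$. Ryabykh's theorem then gives $F\in H^p\subseteq H^2$, so that $|F|^p\in L^1({\mathbb{T}})$, the coefficients $b_m$ are well defined, and Theorem \ref{fourier_fp} is applicable.

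Now write $F(e^{i\theta})=\sum_{j=0}^\infty a_j e^{ij\theta}$ and recall $k(z)=\sum_{n=0}^\infty c_n z^n$, so that from \eqref{K_eq} the function $K$ has Taylor coefficients $c_n/(n+1)$ and hence $\conj{K}=\sum_{n}\conj{c_n}(n+1)^{-1}e^{-in\theta}$. Substituting these expansions into the formula of Theorem \ref{fourier_fp} and integrating term by term (justified because $Fe^{im\theta}\in L^2$ and the bracketed factor lies in $L^2$, so their product is in $L^1$), only the terms with $n=j+m$ survive, and one is left with
\begin{equation*}
b_{-m}=\frac{1}{\|\phi\|}\sum_{j=0}^\infty a_j\,\conj{c_{j+m}}\,D_j,
\qquad
D_j=\frac{p}{2}+\Bigl(1-\frac{p}{2}\Bigr)\frac{m+1}{\,j+m+1\,}.
\end{equation*}

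The crux of the argument is the elementary estimate $0<D_j\le p/2$. Since $p\ge 2$, the factor $1-\tfrac{p}{2}$ is nonpositive while $\frac{m+1}{j+m+1}\in(0,1]$ decreases in $j$, so $D_j$ increases from $D_0=1$ up to its supremum $p/2$; in particular $|D_j|\le p/2$ for all $j$. Granting this, I would bound $|b_{-m}|\le \frac{p}{2\|\phi\|}\sum_{j}|a_j|\,|c_{j+m}|$ and finish with the Cauchy--Schwarz inequality, using $\sum_j|a_j|^2=\|F\|_{H^2}^2$ and $\sum_{j=0}^\infty|c_{j+m}|^2=\sum_{n=m}^\infty|c_n|^2$, which produces exactly the stated bound; then $|b_m|=|b_{-m}|$ completes the proof. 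The only genuine obstacle is the bookkeeping that isolates $D_j$ together with the observation that $p$ even (hence $p\ge 2$) is precisely what forces $D_j\le p/2$; the analytic input—interchanging sum and integral and the validity of Theorem \ref{fourier_fp}—is routine once $k\in H^2$ has been secured.
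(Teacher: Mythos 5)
Your proposal is correct and follows essentially the same route as the paper: apply Theorem \ref{fourier_fp}, expand $F$, $k$, and $K$ in Fourier series, use orthogonality to isolate the coefficient sum, bound the combined coefficient factor (your $D_j$) by $p/2$ using that $1-\tfrac{p}{2}\le 0$ and $\tfrac{m+1}{j+m+1}\in(0,1]$, and finish with Cauchy--Schwarz. The only cosmetic differences are that you dispose of the degenerate case via $\sum|c_n|^2=\infty$ rather than $k\notin H^2$ (the same condition) and apply the $D_j$ bound before Cauchy--Schwarz instead of after, neither of which changes the argument.
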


\begin{proof}
The theorem is trivially true if $k \not \in H^2$, so we may assume 
that $k \in A^2 \subset A^q.$
Let $F(z) = \sum_{n=0}^\infty a_n z^n$. 
Since $F \in H^p,$ and $p\ge 2$, we have $F \in H^2.$  Now, using Theorem
\ref{fourier_fp}, we find that
\begin{equation*}
\begin{split}
b_{-m} &= 
\frac{1}{2\pi} \int_0^{2\pi}|F(e^{i\theta})|^p e^{im\theta} d\theta \\ &= 
\frac{1}{2\pi\|\phi\|} \int_0^{2\pi} (Fe^{im\theta}) \left[ \left(\frac{p}{2}\right) 
\conj{k} + 
\left( 1 - \frac{p}{2}\right)(m+1)\conj{K} \right] d\theta \\
&=  \frac{1}{2\pi\|\phi\|} \int_0^{2\pi} 
\left[ \sum_{n=0}^\infty a_{n}e^{i(n+m)\theta} \right]
 \Bigg[ \sum_{j=0}^\infty \left( \left(\frac{p}{2}\right) \conj{c_j} + 
\frac{m+1}{j+1} \left( 1 - \frac{p}{2} \right) \conj{c_j} \right) e^{-ij\theta} \Bigg] d\theta
\\
&=   \frac{1}{\|\phi\|} 
\left| \sum_{n=0}^\infty a_{n} 
\left( \left(\frac{p}{2}\right) \conj{c_{n+m}} + 
\frac{m+1}{n+m+1} \left( 1 - \frac{p}{2} \right) \conj{c_{n+m}} \right)\right|.
\\
\end{split}
\end{equation*}
The Cauchy-Schwarz inequality now gives 
\begin{equation*}
\begin{split}
|b_{-m}| &\le  \frac{1}{\|\phi\|} 
\left[ \sum_{n=0}^\infty |a_n|^2 \right]^{1/2}
\left[ \sum_{n=m}^\infty \left| \left(\frac{p}{2}\right) \conj{c_n} + 
\frac{m+1}{n+1} \left( 1 - \frac{p}{2} \right) \conj{c_n} \right|^2
\right]^{1/2}\\
&\le \frac{p}{2\|\phi\|}  \left[ \sum_{n=0}^\infty |a_{n}|^2 \right]^{1/2}
\left[ \sum_{n=m}^\infty |c_n|^2 \right]^{1/2}.
\end{split}
\end{equation*}
Since 
\begin{equation*} 
 \left[ \sum_{n=0}^\infty |a_{n}|^2 \right]^{1/2} = \|F\|_{H^2}
\end{equation*}
the theorem follows.
\end{proof}

The estimate in Theorem \ref{Fp_bound} can be used to obtain information 
about the size of $|F|^p$ and $F$, as in the following corollary. 

\begin{corollary}\label{order_k_linfty_condition}
If $c_n = O(n^{-\alpha})$ for some $\alpha > 3/2$, then $F \in H^\infty$.
\end{corollary}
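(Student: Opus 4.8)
The plan is to show that the Fourier coefficients of $|F|^p$ on the unit circle are absolutely summable; this forces $|F|^p$ to agree almost everywhere with a continuous function on $\mathbb{T}$, hence to be bounded, and since $F \in H^p$ with bounded boundary modulus we conclude $F \in H^\infty$.

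First I would observe that the hypothesis $c_n = O(n^{-\alpha})$ with $\alpha > 3/2$ in particular gives $\sum_n |c_n|^2 < \infty$, so $k \in H^2$ and the estimate of Theorem \ref{Fp_bound} is genuinely finite. Writing $b_m$ for the Fourier coefficients of $|F|^p$ as in that theorem, we have
$$|b_m| = |b_{-m}| \le \frac{p}{2\|\phi\|}\,\|F\|_{H^2}\left[\sum_{n=m}^\infty |c_n|^2\right]^{1/2},$$
and since $F \in H^p \subset H^2$ by Ryabykh's theorem (as $p \ge 2$), the factor $\|F\|_{H^2}$ is finite.

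Next I would estimate the tail sum. From $|c_n|^2 = O(n^{-2\alpha})$ together with $2\alpha > 1$, comparison with an integral yields $\sum_{n=m}^\infty |c_n|^2 = O(m^{1-2\alpha})$, whence
$$|b_m| = O\!\left(m^{(1-2\alpha)/2}\right) = O\!\left(m^{1/2-\alpha}\right).$$
Because $\alpha > 3/2$, the exponent satisfies $1/2 - \alpha < -1$, so $\sum_{m=1}^\infty |b_m| < \infty$, and using the symmetry $|b_{-m}| = |b_m|$ the full bilateral sum $\sum_{m=-\infty}^\infty |b_m|$ converges. Absolute summability of its Fourier coefficients means the series $\sum_m b_m e^{im\theta}$ converges uniformly to a continuous function equal almost everywhere to $|F(e^{i\theta})|^p$; in particular $|F|^p \in L^\infty(\mathbb{T})$, so the boundary function $F(e^{i\theta})$ is essentially bounded. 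Since $F \in H^p$ and its boundary values are bounded, $F \in H^\infty$.

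I do not expect a serious obstacle here; the only point requiring care is the bookkeeping on the exponent, namely confirming that $\alpha > 3/2$ is precisely the threshold making $\sum_m |b_m|$ converge. The gain of one-half power from squaring the coefficients and summing the tail must be balanced against the fact that we need genuine summability of the $b_m$, not merely square-summability, and this is exactly what the hypothesis $\alpha > 3/2$ supplies. The final implication, that an absolutely convergent Fourier series represents a bounded function, is standard and requires no further argument.
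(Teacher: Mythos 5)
Your proposal is correct and follows essentially the same route as the paper: apply Theorem \ref{Fp_bound}, bound the tail $\sum_{n\ge m}|c_n|^2$ by an integral comparison to get $|b_m| = O(m^{1/2-\alpha})$, conclude $\{b_m\}\in\ell^1$ since $\alpha > 3/2$, and deduce $|F|^p \in L^\infty$, hence $F \in H^\infty$. The only differences are matters of exposition (you spell out the $k \in H^2$ reduction and the absolutely-convergent-Fourier-series step, which the paper handles implicitly), so there is nothing to correct.
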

\begin{proof}
First observe that
\begin{equation*}
\sum_{n=m}^\infty (n^{-\alpha})^2 \le \int_{m-1}^\infty x^{-2\alpha} dx 
= \frac{(m-1)^{1-2\alpha}}{2\alpha-1}.
\end{equation*}
By hypothesis it follows that
\begin{equation*}
\left[ \sum_{n=m}^\infty |c_n|^2 \right]^{1/2} = O(m^{(1-2\alpha)/2}).
\end{equation*}
Thus, 
Theorem \ref{Fp_bound} shows that 
$b_m = O(m^{(1-2\alpha)/2})$. Therefore 
$\{b_m\} \in \ell^1$ if $\alpha > 3/2$.  But 
$\{b_m\} \in \ell^1$ implies $|F|^p \in L^\infty$, which implies 
$F \in H^\infty$.
\end{proof}

In fact, $\{b_m\}\in \ell^1$ implies that $|F|^p$ is continuous in 
$\overline{{\mathbb{D}}}$, but this does not necessarily mean $F$ will be continuous 
in $\overline{{\mathbb{D}}}$.
There is a result similar to Corollary \ref{order_k_linfty_condition} 
in  \cite{Khavinson_Stessin}, where 
the authors show that if the kernel $k$ is a polynomial, 
or even a rational function with 
no poles in $\overline{{\mathbb{D}}}$, then $F$ is H\"{o}lder continuous in 
$\overline{{\mathbb{D}}}$. Their technique relies on deep regularity 
results for partial differential equations. Our result only shows that 
$F \in H^\infty$, but it applies to a broader class of kernels.

\section{Relations Between the Size of the Kernel and Extremal Function}

In this section we show that if $p$ is an even integer and 
$q\le q_1 < \infty$, then the 
extremal function $F \in H^{(p-1)q_1}$ if and only if the kernel 
$k \in H^{q_1}.$ For $q_1=q$ the statement reduces to Ryabykh's theorem 
and its previously unknown converse. 
The following theorem  is crucial to the proof.
\begin{theorem}\label{extremal_bound}
Let $p$ be an even integer and let $q = p/(p-1)$ be its conjugate exponent. 
Let $F \in A^p$ be the extremal function corresponding to the kernel 
$k \in A^q$.
Suppose that 
$k \in H^{q_1}$ for some $q_1$ with $q \le q_1 < \infty$, 
and that 
$F \in H^{p_1}$, for some $p_1$ with $p \le p_1 < \infty$.   
Define $p_2$ by 
$$
\frac{1}{q_1} + \frac{1}{p_1} + \frac{1}{p_2} = 1.$$
 If $p_2 < \infty$, then for every trigonometric polynomial $h$ we have  
\begin{equation*}
\left| \int_0^{2\pi} |F|^p h(e^{i\theta})\, d\theta \right| \le 
C\frac{\|k\|_{H^{q_1}}}{\|k\|_{A^q}} \|F\|_{H^{p_1}} 
\|h\|_{L^{p_2}},
\end{equation*}
where $C$ is some constant depending only on $p$, $p_1$, and $q_1$.
\end{theorem}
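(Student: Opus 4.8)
The plan is to start from the identity of Theorem~\ref{norm_formula_ext}, which expresses $\frac{1}{2\pi}\int_0^{2\pi}|F|^p h\,d\theta$ in terms of boundary integrals involving $F$, $k$, and $K$. First I would replace $h$ by its conjugate (or work with $\conj h$) so that the left-hand side reads like the quantity we want to bound; since $h$ is an arbitrary trigonometric polynomial this causes no loss. The right-hand side then has two pieces, one with $\conj k$ and one with $\conj K$, and the strategy is to bound each by the multilinear estimate of Lemma~\ref{klb}.

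\emph{Rewriting the right-hand side as Littlewood--Paley integrals.} The obstacle is that the integrals in Theorem~\ref{norm_formula_ext} are boundary integrals of the form $\int_0^{2\pi} F\,\overline{k}\,h\,d\theta$ and $\int_0^{2\pi} F\,(zh)'\,\overline{K}\,d\theta$, whereas Lemma~\ref{klb} controls expressions of the shape $\pv\int_{\mathbb D}\conj{f_1}f_2 f_3'\,d\sigma$ involving a \emph{derivative} and an \emph{area} integral. So the key maneuver is to run the Cauchy--Green theorem in reverse, exactly as in the proof of Theorem~\ref{norm_formula_ext}, to convert each boundary integral into a principal-value area integral with one factor differentiated. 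Concretely I expect the $\conj k$ term to become (a multiple of) $\pv\int_{\mathbb D}\conj K\,(Fh)'\cdots$ type expressions and the $\conj K$ term to become similar expressions, after writing $k=(zK)'$ and integrating by parts so that the derivative falls where Lemma~\ref{klb} can absorb it. Since $h$ is a trigonometric polynomial and $F\in H^{p_1}$, $K\in H^{q_1}$ (because $k\in H^{q_1}$ gives $K\in H^{q_1}$ by the averaging bound $\|K\|_{H^{q_1}}\le\|k\|_{H^{q_1}}$ cited in the proof of Theorem~\ref{thm_cont}), all factors lie in the right Hardy spaces.

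\emph{Applying Lemma~\ref{klb} with the correct exponents.} Once each term is in the form $\pv\int_{\mathbb D}\conj{f_1}f_2 f_3'\,d\sigma$, I would assign the three factors to the exponents $q_1$ (for the kernel/$K$ factor), $p_1$ (for the $F$ factor), and $p_2$ (for the $h$ factor), checking that $\frac1{q_1}+\frac1{p_1}+\frac1{p_2}=1$ is precisely the reciprocal relation defining $p_2$, and that each of $q_1,p_1$ exceeds $1$ and $p_2<\infty$ so that the hypotheses $1<p_1<\infty$, $1<p_2,p_3\le\infty$ of Lemma~\ref{klb} are met. Lemma~\ref{klb} then yields a bound $C\|K\|_{H^{q_1}}\|F\|_{H^{p_1}}\|h\|_{L^{p_2}}$ (and similarly with $\|k\|_{H^{q_1}}$), with $C$ depending only on the two of the three exponents allowed by the lemma, hence only on $p,p_1,q_1$. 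Using $\|K\|_{H^{q_1}}\le\|k\|_{H^{q_1}}$, summing the two terms, and dividing through by $\|\phi\|$, which by \eqref{A_q_isomorphism} satisfies $\|\phi\|\ge \|k\|_{A^q}/C_p$, produces exactly the claimed factor $\|k\|_{H^{q_1}}/\|k\|_{A^q}$ together with $\|F\|_{H^{p_1}}\|h\|_{L^{p_2}}$.

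\emph{The main obstacle} is the bookkeeping in the reverse Cauchy--Green step: I must carefully distribute the derivative in $(zh)'$ and in the factor $zh$, track which factor carries the prime so that it matches the $f_3'$ slot of Lemma~\ref{klb}, and verify that the boundary terms at $r\to1$ vanish (using $F\in H^{p_1}$ and $K,k\in H^{q_1}$ to justify passing to the limit and the existence of the principal values, as asserted in Lemma~\ref{klb}). A secondary technical point is that $h$ is only a trigonometric polynomial, not analytic, so before differentiating I would split $h$ into its analytic and co-analytic parts or absorb the anti-holomorphic frequencies into the $\conj{f_1}$ factor; because $h$ has finitely many frequencies this does not affect the $L^{p_2}$ bound up to the constant $C$. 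Once the boundary terms are shown to drop and the three factors are correctly placed, the estimate follows immediately from Lemma~\ref{klb}.
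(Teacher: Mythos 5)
Your proposal is correct and is essentially the paper's own proof: the paper simply quotes the intermediate area-integral identity \eqref{41eq1} from the proof of Theorem \ref{norm_formula_ext} (which is exactly what your reverse Cauchy--Green step recovers), then applies Lemma \ref{klb} with the exponent assignment $(q_1,p_1,p_2)$, uses $\|\phi\| \ge \|k\|_{A^q}/C_p$ from \eqref{A_q_isomorphism}, and finally extends from analytic to trigonometric polynomials by the same analytic/co-analytic splitting $h = h_1 + \conj{h_2}$ you describe. The one point to tighten is your justification of that splitting: the bound $\|h_1\|_{H^{p_2}} + \|h_2\|_{H^{p_2}} \le C\|h\|_{L^{p_2}}$ must come from the boundedness of the Szeg\H{o} projection on $L^{p_2}$ for $1<p_2<\infty$ (so that $C$ depends only on $p_2$), not from $h$ having ``finitely many frequencies,'' since a constant depending on the degree of $h$ would render the final estimate vacuous.
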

The excluded case $p_2 = \infty$ occurs if and only if 
$q=q_1$ and $p=p_1.$  The theorem is then a trivial consequence of Ryabykh's 
theorem.
\begin{proof}[Proof of Theorem]
First let $h$ be an analytic polynomial.
In the proof of Theorem \ref{norm_formula_ext}, we showed that 
\begin{equation}\label{41eq1}
\begin{split}
 \frac{1}{2\pi} \int_0^{2\pi} |F(e^{i\theta})|^p h(e^{i\theta})\, d\theta = 
\lim_{n\rightarrow\infty} \frac{1}{\pi\|\phi\|} 
\int_{{\mathbb{D}}} \left((hz)'F + \frac{p}{2}hz(S_n F)'\right) \conj{k} \,dA(z).\\
\end{split}
\end{equation}
An application of Lemma \ref{klb} gives
\begin{equation*}
\lim_{n\rightarrow\infty} \int_{{\mathbb{D}}} hz(S_n F)' \conj{k}\, dA = 
\pv \int_{{\mathbb{D}}} hzF' \conj{k}\, dA,
\end{equation*}
so that the right-hand side of equation \eqref{41eq1} becomes 
\begin{equation*}
\frac{1}{\pi\|\phi\|} 
\pv \int_{{\mathbb{D}}} \left((hz)'F + \frac{p}{2}hzF'\right)\conj{k}\,dA(z).
\end{equation*}
Apply Lemma \ref{klb} separately to the two parts of the integral to 
conclude that its absolute value is bounded by
\begin{equation*}
C \frac{1}{\|\phi\|}\|k\|_{H^{q_1}} \|f\|_{H^{p_1}} \|h\|_{H^{p_2}},
\end{equation*}
where $C$ is a constant depending only on $p_1$ and $q_1$. 
Since 
$$\frac{1}{\|\phi\|} \le \frac{C_p}{\|k\|_{A^q}}$$ 
by equation \eqref{A_q_isomorphism},
this gives the desired result for the special case where $h$ is an analytic polynomial.

Now let $h$ be an arbitrary trigonometric polynomial. 
Then $h = h_1 + \conj{h_2},$ where $h_1$ and 
$h_2$ are analytic polynomials, and $h_2(0)=0.$    
Note that the Szeg\H{o} projection $S$ is bounded from 
$L^{p_2}$ into $H^{p_2}$ because $1< p_2< \infty.$  Thus, 
$$ \|h_1\|_{H^{p_2}} = \|S(h)\|_{H^{p_2}} \le C \|h\|_{L^{p_2}}.$$
Also, 
$$ \|h_2\|_{H^{p_2}} = \|z S(e^{-i\theta} \conj{h})\|_{H^{p_2}} = 
\|S(e^{-i\theta}\conj{h})\|_{H^{p_2}} \le C \|e^{-i\theta} \conj{h}\|_{L^{p_2}}
= C\|h\|_{L^{p_2}},$$
and so
$$ \|h_1\|_{H^{p_2}} + \|h_2\|_{H^{p_2}} \le 
C\|h\|_{L^{p_2}}.$$ 
Therefore, by what we have already shown,
\begin{equation*}
\begin{split}
\left|\int_{0}^{2\pi} |f(e^{i\theta})|^p h(e^{i\theta}) d\theta \right| &= 
\left|\int_{0}^{2\pi} |f(e^{i\theta})|^p(h_1(e^{i\theta})+\conj{h_2(e^{i\theta})}) d\theta 
\right| \\ 
&\le \left| \int_{0}^{2\pi} |f|^p h_1 \, d\theta\right| + 
\left| \conj{\int_{0}^{2\pi} |f|^p h_2\, d\theta} \right| \\ 
&\le 
 C\frac{\|k\|_{H^{q_1}}}{\|k\|_{A^q}} \|f\|_{H^{p_1}}(\|h_1\|_{H^{p_2}} + \|h_2\|_{H^{p_2}}) 
\\&\le
C\frac{\|k\|_{H^{q_1}}}{\|k\|_{A^q}} \|f\|_{H^{p_1}} \|h\|_{L^{p_2}}.\qedhere
\end{split}
\end{equation*}
\end{proof}

For a given $q_1$, we will apply the theorem just proved with $p_1$ chosen 
as $p_1 = pp_2'$, where $p_2'$ is the conjugate exponent to $p_2.$  This 
will allow us to bound the $H^{p_1}$ norm of $f$ solely in terms of 
$\|\phi\|$ and $\|k\|_{H^{q_1}}$. 

\begin{theorem}\label{r_ext}
 Let $p$ be an even integer, and let $q$ be its conjugate 
exponent. Let $F\in A^p$ be the extremal function for a kernel 
$k \in A^q$. 
If, for $q_1$ such that $q \le q_1 < \infty$, the kernel  
$k \in H^{q_1},$ then $F \in H^{p_1}$ for $p_1=(p-1)q_1.$  In fact, 
\begin{equation*}
\|F\|_{H^{p_1}} \le C\left(
\frac{\|k\|_{H^{q_1}}}{\|k\|_{A^q}}\right)^{1/(p-1)},
\end{equation*}
where $C$ depends only on $p$ and $q_1$.
\end{theorem}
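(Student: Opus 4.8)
The plan is to recast Theorem~\ref{extremal_bound} as a duality statement and then run a bootstrap whose exponents increase to the target value $(p-1)q_1$. Write $M = \|k\|_{H^{q_1}}/\|k\|_{A^q}$, and suppose provisionally that $F \in H^{s}$ for some $s$ with $p \le s < (p-1)q_1$. Taking $p_1 = s$ in Theorem~\ref{extremal_bound}, the exponent $p_2$ is fixed by $1/p_2 = 1 - 1/q_1 - 1/s$, and a short computation using $q \le q_1$ shows $1 < p_2 < \infty$, so the theorem applies and yields $|\int_0^{2\pi}|F|^p h\,d\theta| \le C M \|F\|_{H^s}\|h\|_{L^{p_2}}$ for every trigonometric polynomial $h$. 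Since trigonometric polynomials are dense in $L^{p_2}$ (as $1<p_2<\infty$) and $|F|^p \in L^1$, the map $h \mapsto \int_0^{2\pi}|F|^p h\,d\theta$ extends to a bounded functional on $L^{p_2}$, which is represented by some $G \in L^{p_2'}$; comparing Fourier coefficients forces $G = |F|^p$, so $|F|^p \in L^{p_2'}$ with $\||F|^p\|_{L^{p_2'}} \le CM\|F\|_{H^s}$. Because $\||F|^p\|_{L^{p_2'}} = \|F\|_{L^{pp_2'}}^{p}$ and $F$ is analytic, this upgrades $F$ to $H^{pp_2'}$ with $\|F\|_{H^{pp_2'}}^{p} \le CM\,\|F\|_{H^{s}}$. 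The arithmetic behind the hint is exactly the observation that the consistency condition $p_1 = pp_2'$ is equivalent to $p_1 = (p-1)q_1$, so that at that one exponent the estimate becomes self-improving.

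Next I would iterate this gain. Define $g$ by $1/g(s) = 1/(pq_1) + 1/(ps)$, so that $g(s) = pp_2'$ above. Then $g$ is increasing, its unique positive fixed point is $s^{\ast} := (p-1)q_1$, and $g(s) > s$ for $0 < s < s^{\ast}$. Starting from $s_0 = p$ (where $F \in H^p$ by Ryabykh's theorem) and setting $s_{n+1} = g(s_n)$, the exponents increase monotonically to $s^{\ast}$, and at each stage the displayed estimate gives $F \in H^{s_{n+1}}$ together with $\|F\|_{H^{s_{n+1}}}^p \le CM\|F\|_{H^{s_n}}$. Throughout the iteration the exponents $p_1 = s_n$ and $p_2 = p_2(s_n)$ stay in a compact subinterval of $(1,\infty)$, bounded away from $1$ and $\infty$, so the constant $C$ of Theorem~\ref{extremal_bound} can be taken independent of $n$, depending only on $p$ and $q_1$.

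Finally I would pass to the limit to extract both membership and the sharp bound at once. Writing $u_n = \log\|F\|_{H^{s_n}}$ — a finite and, since the exponents increase, nondecreasing sequence — the recursion reads $u_{n+1} \le \tfrac1p(\log(CM) + u_n)$, a contraction toward $L := \log(CM)/(p-1)$: indeed $u_{n+1} - L \le \tfrac1p(u_n - L)$, so $u_n - L \le p^{-n}(u_0 - L) \to 0$. As $u_n$ is nondecreasing, $\lim_n u_n = \sup_n u_n \le L$, that is, $\sup_n \|F\|_{H^{s_n}} \le (CM)^{1/(p-1)}$. Since $s \mapsto \|F\|_{L^{s}}$ is nondecreasing and left-continuous (by monotone convergence on $\{|F|\ge 1\}$ and dominated convergence on $\{|F|<1\}$ applied to $|F|^{s}$), letting $s_n \uparrow s^{\ast}$ gives $F \in H^{s^{\ast}}$ with $\|F\|_{H^{(p-1)q_1}} \le (CM)^{1/(p-1)}$, which is the assertion. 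When $q_1 = q$ we have $s^{\ast} = p$ and the statement is just Ryabykh's theorem.

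The main obstacle is precisely this passage to the endpoint exponent. The greedy iteration only approaches $s^{\ast}$ asymptotically and never reaches it in finitely many steps, so a direct application of Theorem~\ref{extremal_bound} at $p_1 = (p-1)q_1$ would be circular, presupposing the very membership $F \in H^{(p-1)q_1}$ one is trying to prove. Overcoming this needs two ingredients working together: the uniform control of $C$ along the iteration (so that the recursion has constant coefficients) and the left-continuity of $s \mapsto \|F\|_{H^s}$ (so that the sup over $s_n$ transfers to the value at $s^{\ast}$). The contraction in the $\log$-norms is what converts the recursive estimate into the clean exponent $1/(p-1)$; the density/representation step and the elementary verification that $1 < p_2 < \infty$ are the remaining routine points.
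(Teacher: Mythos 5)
Your proposal is correct, but it follows a genuinely different route from the paper's. Both arguments must evade the same circularity: Theorem~\ref{extremal_bound} at the endpoint exponent $p_1=(p-1)q_1$ cannot be applied to $F$ itself, since turning the resulting inequality $\|F\|_{H^{p_1}}^{p}\le CM\|F\|_{H^{p_1}}$ into a bound requires the finiteness of $\|F\|_{H^{p_1}}$, which is what is to be proven. You evade it by fixing $F$ and bootstrapping the exponent: you apply the theorem at exponents $s_n$ increasing to the fixed point $s^{*}=(p-1)q_1$ of $g(s)=pq_1s/(s+q_1)$, obtain the recursion $\|F\|_{H^{s_{n+1}}}^{p}\le CM\|F\|_{H^{s_n}}$ (where the right side is finite, so no illegal division occurs), let the log-contraction produce the exponent $1/(p-1)$, and pass to the endpoint by monotone convergence of the boundary integrals. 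The paper instead fixes the exponent at $p_1=(p-1)q_1$ and approximates the kernel: it takes $F_n$ to be the extremal function for the polynomial kernel $S_n k$, invokes Corollary~\ref{order_k_linfty_condition} to get $F_n\in H^{\infty}$ (so dividing by $\|F_n\|_{H^{p_1}}$ is legitimate), deduces $\|F_n\|_{H^{p_1}}^{p-1}\le C\|S_nk\|_{H^{q_1}}/\|S_nk\|_{A^q}$, and then lets $n\to\infty$ using the continuity theorem of \cite{me} ($S_nk\to k$ in $A^q$ forces $F_n\to F$ in $A^p$, hence locally uniformly) together with integral means on circles $|z|=r$. As to what each buys: your argument is more self-contained given Theorem~\ref{extremal_bound} and Ryabykh's theorem, needing neither the Fourier-coefficient material of Section~3 (Corollary~\ref{order_k_linfty_condition}) nor the $A^p$-continuity result from \cite{me}; its price is exactly the two points you flag, namely uniformity of the constant in Theorem~\ref{extremal_bound} as $p_1$ ranges over $[p,s^{*}]$ --- which is true, because the Littlewood--Paley, Hardy--Littlewood, and M.~Riesz (Szeg\H{o} projection) constants underlying Lemma~\ref{klb} are bounded on compact subsets of $(1,\infty)$, but which deserves an explicit statement --- and the standard fact that an $H^p$ function whose boundary values lie in $L^{s}$ belongs to $H^{s}$ (Poisson representation), which you use to upgrade $|F|^p\in L^{p_2'}$ to $F\in H^{pp_2'}$. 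The paper's single application of the estimate at the endpoint avoids both issues, at the cost of the kernel-approximation machinery.
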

\begin{proof}
The case $q_1 = q$ is Ryabykh's theorem, so we assume $q_1 > q.$ 
Set $p_1 = (p-1)q_1.$ Then $p_1 > p = (p-1)q.$  
Choose $p_2$ so that $$
\frac{1}{q_1} + \frac{1}{p_1} + \frac{1}{p_2} = 1.$$  
This implies that $p_2 = p_1/(p_1-p)$, and so its conjugate exponent
$p_2' = p_1/p$.  Note that $1<p_2<\infty$. 
Let $F_n$ denote the extremal function corresponding to the kernel 
$S_n k$, which does not vanish identically if $n$ is chosen sufficiently large. 
Since $S_n k$ is a polynomial, 
$F_n$ is in $H^\infty$  (and thus $F_n \in H^{p_1}$) by Corollary
\ref{order_k_linfty_condition}. Hence
for any trigonometric polynomial $h$, Theorem \ref{extremal_bound} yields  
\begin{equation*}
\left|\frac{1}{2\pi}\int_0^{2\pi} |F_n|^p h(e^{i\theta}) d\theta \right|\le 
C \frac{\|S_n k\|_{H^{q_1}}}{\|S_n k\|_{A^q}} 
\|F_n\|_{H^{p_1}} \|h\|_{L^{p_2}}. 
\end{equation*}
Since the trigonometric polynomials are dense in $L^{p_2}(\partial {\mathbb{D}})$, 
taking the supremum over all trigonometric polynomials $h$ 
with $\|h\|_{L^{p_2}} \le 1$ gives 
\begin{equation*}
 \| |F_n|^p\|_{L^{p_2'}} \le
C \frac{\|S_n k\|_{H^{q_1}}}{\|S_n k\|_{A^q}} \|F_n\|_{H^{p_1}},
\end{equation*}
which implies
\begin{equation*}
\begin{split}
\|F_n\|_{H^{p_1}}^p &= 
 \left\{ \frac{1}{2\pi} \int_0^{2\pi} 
(|F_n(e^{i\theta})|^p)^{p_2'} d\theta \right\}^{1/p_2'} =
\| |F_n|^p\|_{L^{p_2'}} \\ &\le
C \frac{\|S_n k\|_{H^{q_1}}}{\|S_n k\|_{A^q}} \|F_n\|_{H^{p_1}},
\end{split}
\end{equation*}
since $pp_2' = p_1$.
Because $\|F_n\|_{H^{p_1}} < \infty$, 
we may divide both sides of the inequality by $\|F_n\|_{H^{p_1}}$ to obtain
\begin{equation*}
\|F_n\|_{H^{p_1}}^{p-1} \le C  \frac{\|S_n k\|_{H^{q_1}}}{\|S_n k\|_{A^q}},
\end{equation*}
where $C$ depends only  on $p$ and $q_1$. In other words,
\begin{equation*}
\left( \frac{1}{2\pi} \int_0^{2\pi} |F_n(re^{i\theta})|^{p_1} d\theta \right)^{(p-1)/p_1} 
\le C\frac{\|S_n k\|_{H^{q_1}}}{\|S_n k\|_{A^q}}
\end{equation*} 
for all $r<1$ and for all $n$ sufficiently large. 
Note that $S_n k \rightarrow k$ in 
$H^{q_1}$ and in $A^q$.
Since $S_n k \rightarrow k$ in $A^q,$ Theorem 3.1 in \cite{me} says that 
$F_n \rightarrow F$ in $A^p$, and thus $F_n \rightarrow F$ 
uniformly on compact subsets of ${\mathbb{D}}$.  
Thus, letting $n \rightarrow \infty$ in the last inequality gives 
\begin{equation*}
\left(  \frac{1}{2\pi} \int_0^{2\pi} |F(re^{i\theta})|^{p_1} d\theta \right)^{(p-1)/p_1} 
\le C\frac{\|k\|_{H^{q_1}}}{\|k\|_{A^q}}
\end{equation*}
for all $r<1.$  In other words,  
\begin{equation*}
\|F\|_{H^{p_1}} \le \left(C\frac{\|k\|_{H^{q_1}}}{\|k\|_{A^q}}\right)^{1/(p-1)}. \qedhere
\end{equation*}
\end{proof} 

Recall from Section \ref{intro} that 
a function $F \in A^p$ with unit norm has a 
corresponding kernel $k \in A^q$ such that $F$ is the extremal function for 
$k$, and this kernel is uniquely determined up to a positive multiple.
Theorem \ref{r_ext}  says that if $p$ is an even integer and a 
kernel $k$ belongs not only to the 
Bergman space $A^q$ but also to the Hardy space $H^{q_1}$ for some 
$q_1$ where $q \le q_1 < \infty$, then the $A^p$ extremal function $F$
associated with it   
is actually in $H^{p_1}$ for $p_1 = (p-1)q_1 \ge p.$ 
It is natural to ask 
whether the converse is true.  In other words, if $F\in H^{p_1}$ for some 
$p_1$ with $p \le p_1 < \infty$, 
must it follow that the corresponding kernel belongs to 
$H^{q_1}$?    
The following theorem says that this is indeed the case. 
\begin{theorem} Suppose $p$ is an even integer and let $q$ be its conjugate 
exponent. 
Let $F \in A^p$ with $\|F\|_{A^p}=1$, 
and let $k$ be a kernel such that $F$ is the extremal function for $k$. 
If $F \in H^{p_1}$ for some $p_1$ with $p \le p_1 < \infty$, then   
$k \in H^{q_1}$ for $q_1 = p_1/(p-1)$, and
\begin{equation*}
\frac{\|k\|_{H^{q_1}}}{\|k\|_{A^q}} \le
C \|F\|_{H^{p_1}}^{p-1},
\end{equation*} 
where $C$ is a constant depending only on $p$ and $p_1$. 
\end{theorem}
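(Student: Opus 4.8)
The plan is to bound $\|k\|_{H^{q_1}}$ by duality, testing $k$ against analytic polynomials $g$ in the Hardy pairing $\langle k,g\rangle = \tfrac{1}{2\pi}\int_0^{2\pi} k\conj{g}\,d\theta$, and to convert this pairing into a Bergman-space integral through the extremal condition (Theorem \ref{integral_extremal_condition}). The device that makes this work is the substitution $h=(zg)'$: the Bergman pairing $\phi(h)=\int_{{\mathbb{D}}} h\conj{k}\,d\sigma$ weights the $n$th Taylor coefficient of $h$ by $1/(n+1)$, while $g\mapsto (zg)'$ multiplies the $n$th coefficient by $(n+1)$, so the two effects cancel and one finds
\begin{equation*}
\conj{\langle k,g\rangle} = \phi\big((zg)'\big) = \|\phi\|\int_{{\mathbb{D}}}(zg)'\,|F|^{p-1}\conj{\sgn F}\,d\sigma
\end{equation*}
for every analytic polynomial $g$, the last equality being Theorem \ref{integral_extremal_condition}. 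Because $p$ is even, the density $|F|^{p-1}\conj{\sgn F}$ factors as $F^{(p/2)-1}\conj{F}^{\,p/2}$, so the integral on the right equals $\int_{{\mathbb{D}}}\conj{F^{p/2}}\,F^{(p/2)-1}\,(zg)'\,d\sigma$, an absolutely convergent integral (hence equal to its principal value, since $|F|^{p-1}(zg)'\in L^1(d\sigma)$).

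Next I would record the exponent bookkeeping needed to apply Lemma \ref{klb} to this integral, with $f_1=F^{p/2}$, $f_2=F^{(p/2)-1}$, and $f_3=zg$. Since $F\in H^{p_1}$ we have $F^{p/2}\in H^{2p_1/p}$ and $F^{(p/2)-1}\in H^{2p_1/(p-2)}$ (interpreted as $H^\infty$ when $p=2$), and, using $p_1=(p-1)q_1$,
\begin{equation*}
\frac{p}{2p_1}+\frac{p-2}{2p_1}+\frac{1}{q_1'}=\frac{p-1}{p_1}+\frac{1}{q_1'}=\frac{1}{q_1}+\frac{1}{q_1'}=1 .
\end{equation*}
Thus Lemma \ref{klb} applies with $p_3=q_1'$ and yields
\begin{equation*}
\left|\int_{{\mathbb{D}}}\conj{F^{p/2}}\,F^{(p/2)-1}\,(zg)'\,d\sigma\right|\le C\,\|F^{p/2}\|_{H^{2p_1/p}}\,\|F^{(p/2)-1}\|_{H^{2p_1/(p-2)}}\,\|zg\|_{H^{q_1'}}=C\,\|F\|_{H^{p_1}}^{p-1}\,\|g\|_{H^{q_1'}},
\end{equation*}
where $C$ depends only on $p$ and $p_1$. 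Combined with the displayed identity this gives $|\langle k,g\rangle|\le C\|\phi\|\,\|F\|_{H^{p_1}}^{p-1}\|g\|_{H^{q_1'}}$ for all analytic polynomials $g$.

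Finally, since $1<q_1'<\infty$ the analytic polynomials are dense in $H^{q_1'}$, so the functional $g\mapsto\langle k,g\rangle$ extends boundedly to $H^{q_1'}$; by the duality $(H^{q_1'})^*\cong H^{q_1}$ its representing function lies in $H^{q_1}$ with norm at most $C\|\phi\|\,\|F\|_{H^{p_1}}^{p-1}$. That representing function has $n$th Taylor coefficient $\langle k,z^n\rangle=c_n$, so it coincides with $k$; hence $k\in H^{q_1}$ and $\|k\|_{H^{q_1}}\le C\|\phi\|\,\|F\|_{H^{p_1}}^{p-1}$. The asserted estimate then follows on dividing by $\|k\|_{A^q}$ and using $\|\phi\|\le\|k\|_{A^q}$ from \eqref{A_q_isomorphism}.

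The main obstacle is the second step. A direct Hölder estimate on $\int_{{\mathbb{D}}}\conj{F^{p/2}}F^{(p/2)-1}(zg)'\,d\sigma$ loses a derivative on $g$ and produces a Bergman-type norm of $(zg)'$ rather than the Hardy norm of $g$ (already for $g=z^n$ the Bergman norm of $(zg)'$ grows in $n$ while $\|z^n\|_{H^{q_1'}}=1$). Lemma \ref{klb} is precisely the Littlewood--Paley estimate that absorbs this derivative and returns boundary norms, so the crux is to arrange the integrand into the form $\conj{f_1}f_2 f_3'$ with the single derivative on the factor carrying $g$ and to verify that the three Hardy exponents sum to $1$; the coefficient-cancellation identity and the duality conclusion are then routine.
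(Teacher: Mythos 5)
Your proof is correct, and its skeleton coincides with the paper's: both convert the problem into a Bergman pairing via Theorem \ref{integral_extremal_condition} applied to test functions of the form $(zh)'$, factor $|F|^{p-1}\conj{\sgn F}=\conj{F^{p/2}}F^{(p/2)-1}$ using that $p$ is even, and apply Lemma \ref{klb} with exactly the choices $f_1=F^{p/2}$, $f_2=F^{(p/2)-1}$, $f_3=zh$ and the same exponent bookkeeping, then finish by duality. Where you genuinely diverge is the endgame. The paper, having produced $\widetilde{k}\in H^{q_1}$ representing the functional $\psi(h)=\int_{{\mathbb{D}}}\conj{k}(zh)'\,d\sigma$, proves $k=\widetilde{k}$ by running the Cauchy--Green theorem backwards (equation \eqref{kernel_trick_eqn}), substituting the antiderivative $H(z)=\frac{1}{z}\int_0^z h(\zeta)\,d\zeta$, and invoking density of polynomials in $A^p$ together with the $A^q$ duality; it then extracts the norm bound by pushing the estimate to trigonometric polynomials through the Szeg\H{o} projection and using $L^{q_1'}$ duality. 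You instead exploit the coefficient cancellation $\phi((zg)')=\conj{\langle k,g\rangle}$ (the $1/(n+1)$ weight in the Bergman pairing against the factor $(n+1)$ produced by $g\mapsto(zg)'$), so the $H^{q_1}$ representative is identified with $k$ simply by testing on monomials $z^n$, and the bound on $\|k\|_{H^{q_1}}$ comes straight from the (non-isometric) duality $(H^{q_1'})^*\cong H^{q_1}$ --- acceptable since constants depending on $p$ and $p_1$ are allowed. This is a real streamlining: it eliminates the paper's computation \eqref{kernel_trick_eqn} and the Szeg\H{o}-projection step. One presentational caveat: at the outset $k$ is only known to lie in $A^q$, so it has no boundary values and $\langle k,g\rangle=\frac{1}{2\pi}\int_0^{2\pi}k\conj{g}\,d\theta$ is not literally defined; you should take the definition of your functional to be $g\mapsto\phi((zg)')$ (equivalently the finite coefficient sum $\sum_n \conj{c_n}g_n$) for polynomials $g$, which is exactly what your computation uses, so nothing breaks.
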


\begin{proof}
Let $h$ be a polynomial and let $\phi$ be the functional in $(A^p)^*$ 
corresponding to $k$. Then by Theorem \ref{integral_extremal_condition}, 
\begin{equation*}
\begin{split}
\frac{1}{\|\phi\|} \int_{{\mathbb{D}}} \conj{k(z)} (zh(z))' d\sigma &= 
\int_{{\mathbb{D}}} |F(z)|^{p-1}\sgn(\conj{F(z)}) (zh(z))' d\sigma \\
&= \int_{{\mathbb{D}}} \conj{F^{p/2}} F^{(p/2)-1} (zh(z))' d\sigma .
\end{split}
\end{equation*}
By hypothesis, $F^{p/2} \in H^{(2p_1)/p}$ and 
$F^{(p/2)-1} \in H^{2p_1/(p-2)}.$ A simple calculation shows that 
\begin{equation*}
\frac{1}{q_1'} = \frac{q_1-1}{q_1} = \frac{p_1-p+1}{p_1}
\end{equation*}
and thus \begin{equation*}
\frac{p}{2p_1} + \frac{p-2}{2p_1} + \frac{1}{q_1'}=1.
\end{equation*}
Now we will apply the first part of Lemma \ref{klb} with 
$f_1 = F^{p/2}$ and $f_2 = F^{(p/2)-1}$ and $f_3=zh$, and with 
$2p_1/p$ in place of $p_1$, and $2p_1/(p-2)$ in place of 
$p_2$, and $q_1'$ in place of $p_3.$  Note that this is permitted since 
$1 < 2p_1/p < \infty$, and $1 < q_1' < \infty$, 
and $1 < 2p_1/(p-2) \le \infty$. 
(In fact, we even know that $2p_1/(p-2) < \infty$ unless $p=2$, 
which is a trivial case since then $F=k/\|k\|_{A^2}$.) With these choices, 
Lemma \ref{klb} 
gives

\begin{equation*}
\begin{split} \left|
 \int_{{\mathbb{D}}} \conj{F^{p/2}} F^{(p/2)-1} (zh(z))' d\sigma \right|
& \le C\|F^{p/2}\|_{H^{2p_1/p}} 
\|F^{p/2-1}\|_{H^{2p_1/(p-2)}} \|zh\|_{H^{q_1'}}\\
&=  C\|F\|_{H^{p_1}}^{p/2} \|F\|_{H^{p_1}}^{(p-2)/2} \|h\|_{H^{q_1'}}\\
&= C\|F\|_{H^{p_1}}^{p-1} \|h\|_{H_{q_1'}}.
\end{split}
\end{equation*}

Since 
$$ \left|\int_{{\mathbb{D}}} \conj{k(z)} (zh(z))' d\sigma \right|
 \le 
C\|\phi\|\|F\|_{H^{p_1}}^{p-1} \|h\|_{H_{q_1'}}$$ for all polynomials $h$, we 
may define a continuous linear functional $\psi$ on $H^{q_1'}$ such that 
$$
\psi(h) = \int_{{\mathbb{D}}} \conj{k(z)} (zh(z))' d\sigma
$$
for all analytic polynomials $h$.  
Then $\psi$ has an associated kernel in $H^{q_1}$, which we 
will call $\widetilde{k}.$ 
Thus, for all $h \in H^{q_1'}$, we have 
$$ \psi(h) = \frac{1}{2\pi}\int_0^{2\pi} \conj{\widetilde{k}(e^{i\theta})} 
h(e^{i\theta}) d\theta. $$
But then the Cauchy-Green theorem gives  
\begin{equation}\label{kernel_trick_eqn}
\begin{split}
 \int_{{\mathbb{D}}} \conj{k(z)} (zh(z))' \,d\sigma &= 
\psi(h) \\&= \frac{1}{2\pi}\int_{\partial {\mathbb{D}}} \conj{\widetilde{k}(e^{i\theta})} h(e^{i\theta}) \,d\theta=  
\frac{i}{2\pi}\int_{\partial {\mathbb{D}}} \conj{\widetilde{k}(z)} h(z)z \,d\conj{z}\\&= 
\lim_{r\rightarrow 1} \frac{i}{2\pi}
\int_{\partial (r{\mathbb{D}})} \conj{\widetilde{k}(z)} h(z)z \,d\conj{z} =
\lim_{r\rightarrow 1} 
\int_{r{\mathbb{D}}} \conj{\widetilde{k}(z)} (zh(z))' \,d\sigma \\&= 
\int_{{\mathbb{D}}} \conj{\widetilde{k}(z)} (zh(z))' \,d\sigma,
\end{split}
\end{equation}
where $h$ is any analytic polynomial. 

Now, 
for any polynomial $h(z),$ define the polynomial $H(z)$ so that 
\begin{equation*} H(z) = \frac{1}{z} \int_{0}^z h(\zeta) \, d\zeta.
\end{equation*}
Then substituting $H(z)$ for $h(z)$ in equation \eqref{kernel_trick_eqn}, 
and using the fact that 
$(zH)' = h$, we have 
$$\int_{{\mathbb{D}}} \conj{\widetilde{k}(z)} h(z) d\sigma = 
 \int_{{\mathbb{D}}} \conj{k(z)} h(z) d\sigma$$
for every polynomial $h$.
But since the polynomials are dense in $A^p$, and $k$ and $\widetilde{k}$ are 
both in 
$A^q,$ which is isomorphic to the dual space of $A^p$, we must have that 
$k = \widetilde{k}$, and thus  $k \in H^{q_1}.$ 

Now for any polynomial $h$,
\begin{equation*}
\frac{1}{2\pi}\int_0^{2\pi} \conj{k(e^{i\theta})} h(e^{i\theta}) d\theta \le
C\|\phi\|\|F\|_{H^{p_1}}^{p-1} \|h\|_{H^{q_1'}},
\end{equation*}
and so 
\begin{equation*}
\frac{1}{2\pi}\int_0^{2\pi} \conj{k(e^{i\theta})} h(e^{i\theta}) d\theta \le
C\|k\|_{A^q} \|F\|_{H^{p_1}}^{p-1} \|h\|_{H^{q_1'}}
\end{equation*}
by inequality \eqref{A_q_isomorphism}.
But if $h$ is any trigonometric polynomial, 
\begin{equation*}
\begin{split}
\frac{1}{2\pi}\int_0^{2\pi} \conj{k(e^{i\theta})} h(\theta) d\theta &=
\frac{1}{2\pi}\int_0^{2\pi} \conj{k(e^{i\theta})} \left[S(h)(e^{i\theta})\right] d\theta \\ &\le
C\|k\|_{A^q} \|F\|_{H^{p_1}}^{p-1} \|S(h)\|_{H^{q_1'}} \\&\le
C\|k\|_{A^q} \|F\|_{H^{p_1}}^{p-1} \|h\|_{L^{q_1'}},
\end{split}
\end{equation*}
where $S$ denotes the Szeg\H{o} projection.
Taking the supremum over all trigonometric polynomials $h$ with 
$\|h\|_{L^{q_1'}} \le 1$ and 
dividing both sides of the inequality by $\|k\|_{A^q}$ we arrive at the 
required bound. 
\end{proof}

The main results of this section can be summarized in the following theorem.
\begin{theorem}
Suppose that $p$ is an even integer with conjugate exponent $q$.   
Let $k\in A^q$ and let $F$ be the $A^p$ extremal function associated 
with $k$. 
Let $p_1, q_1$ be a pair of 
numbers such that $q \le q_1 < \infty$ and 
$$p_1 = (p-1)q_1.$$
Then 
$F \in H^{p_1}$ if and only if $k \in H^{q_1}$.
More precisely,
\begin{equation*}
C_1\left(\frac{\|k\|_{H^{q_1}}}{\|k\|_{A^q}}\right)^{1/(p-1)} 
\le  \|F\|_{H^{p_1}} 
\le 
C_2\left(\frac{\|k\|_{H^{q_1}}}{\|k\|_{A^q}}\right)^{1/(p-1)} 
\end{equation*}
where $C_1$ and $C_2$ are constants that depend only on $p$ and $p_1$. 
\end{theorem}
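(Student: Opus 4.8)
The plan is to assemble this theorem directly from the two results just established, namely Theorem~\ref{r_ext} and the (unlabeled) converse theorem immediately preceding it; no new analysis is required, only a matching of hypotheses and a rearrangement of one inequality. Recall that since $F$ is the $A^p$ extremal function for $k$, it automatically has $\|F\|_{A^p}=1$, so the unit-norm hypothesis needed to invoke the converse theorem is satisfied. Throughout I use that $p_1=(p-1)q_1$ is equivalent to $q_1=p_1/(p-1)$, which is exactly the correspondence appearing in both cited results.

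First I would dispose of the forward implication together with the upper bound. Assume $k\in H^{q_1}$. Since $q\le q_1<\infty$, Theorem~\ref{r_ext} applies verbatim and yields both $F\in H^{p_1}$ for $p_1=(p-1)q_1$ and the estimate
\begin{equation*}
\|F\|_{H^{p_1}}\le C_2\left(\frac{\|k\|_{H^{q_1}}}{\|k\|_{A^q}}\right)^{1/(p-1)},
\end{equation*}
with $C_2$ depending only on $p$ and $q_1$. This is precisely the right-hand inequality in the statement.

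For the converse implication and the lower bound, assume instead that $F\in H^{p_1}$ for some $p_1$ with $p\le p_1<\infty$. The preceding theorem then guarantees $k\in H^{q_1}$ for $q_1=p_1/(p-1)$ together with
\begin{equation*}
\frac{\|k\|_{H^{q_1}}}{\|k\|_{A^q}}\le C\,\|F\|_{H^{p_1}}^{p-1}.
\end{equation*}
Raising both sides to the power $1/(p-1)$ and setting $C_1=C^{-1/(p-1)}$ produces the left-hand inequality. Since the memberships $k\in H^{q_1}$ and $F\in H^{p_1}$ have now each been shown to imply the other, they are equivalent, and whenever they hold both displayed bounds are valid simultaneously; combining them gives the asserted double inequality.

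There is no genuine obstacle here, as the theorem is a synthesis of two already-proved results. The only point requiring any care is the bookkeeping on the constants: I would observe that a constant depending on $p$ and $q_1$ depends on $p$ and $p_1$ as well, since $q_1$ and $p_1$ determine one another through $p_1=(p-1)q_1$, so that $C_1$ and $C_2$ may indeed be taken to depend only on $p$ and $p_1$ as claimed.
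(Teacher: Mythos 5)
Your proof is correct and is exactly what the paper intends: this theorem is presented as a summary with no separate proof, being precisely the combination of Theorem~\ref{r_ext} (forward direction and upper bound) with the unlabeled converse theorem (reverse direction and, after raising to the power $1/(p-1)$, the lower bound). Your attention to the unit-norm hypothesis and to the interchangeability of constants depending on $(p,q_1)$ versus $(p,p_1)$ matches the paper's remarks following each of those results.
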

Note that if $p_1 = (p-1)q_1$, then $q \le q_1 < \infty$ is 
equivalent to $p \le p_1 < \infty.$ 

\section{Proof of the Lemmas}
We now give the proofs of Lemmas \ref{kleq} and \ref{klb}.  These proofs 
are rather technical and require applications of maximal functions and 
Littlewood-Paley theory.

\begin{definition} 
For a function $f$ analytic in the unit disc, 
the Hardy-Littlewood maximal function is defined on the unit circle by  
\begin{equation*}
f^*(e^{i\theta}) = \sup_{0 \le r < 1} |f(re^{i\theta})|.
\end{equation*}
\end{definition}
The following is the simplest form of the Hardy-Littlewood maximal theorem 
(see for instance \cite{D_Hp}, p. 12).  
\begin{refthm}\label{hardy_littlewood} {\rm \bf (Hardy-Littlewood.)} 
If $f \in H^p$ for $0<p \le \infty$, then 
$f^* \in L^p$ and 
\begin{equation*}
\|f^*\|_{L^p} \le C \|f\|_{H^p},
\end{equation*}
where $C$ is a constant depending only on $p$.
\end{refthm}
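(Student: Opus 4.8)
The plan is to collapse the whole range $0<p\le\infty$ onto the single case $p=2$, where the radial maximal function is controlled by the classical Hardy--Littlewood maximal operator on the circle. First I would dispose of $p=\infty$: if $f\in H^\infty$ then $|f(re^{i\theta})|\le\|f\|_{H^\infty}$ for every $r$ and $\theta$, so $f^*(e^{i\theta})\le\|f\|_{H^\infty}$ pointwise and the inequality holds with $C=1$. Hence I may assume $0<p<\infty$ for the remainder.

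Next I would reduce to zero-free functions by Riesz factorization. Write $f=Bh$, where $B$ is the Blaschke product formed from the zeros of $f$ and $h\in H^p$ is zero-free with $\|h\|_{H^p}=\|f\|_{H^p}$. Since $|B(z)|\le 1$ throughout the disc, $|f(re^{i\theta})|\le|h(re^{i\theta})|$, so $f^*\le h^*$ and it suffices to bound $\|h^*\|_{L^p}$. Because $h$ has no zeros in the simply connected domain ${\mathbb{D}}$, a single-valued branch of $g=h^{p/2}$ is analytic there, and
\begin{equation*}
\|g\|_{H^2}^2=\frac{1}{2\pi}\int_0^{2\pi}|h(e^{i\theta})|^p\,d\theta=\|h\|_{H^p}^p,
\end{equation*}
so $g\in H^2$. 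Moreover $|h(re^{i\theta})|=|g(re^{i\theta})|^{2/p}$, whence $h^*=(g^*)^{2/p}$ and
\begin{equation*}
\|h^*\|_{L^p}^p=\int_0^{2\pi}(g^*)^2\,d\theta=\|g^*\|_{L^2}^2.
\end{equation*}
Thus the entire statement follows, with a constant depending only on $p$, once I establish the single estimate $\|g^*\|_{L^2}\le C\|g\|_{H^2}$ for $g\in H^2$.

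For that $L^2$ estimate I would pass to the boundary values. Since $g\in H^2\subset H^1$, it is the Poisson integral of its boundary function, so with $P_r$ the Poisson kernel,
\begin{equation*}
|g(re^{i\theta})|\le\frac{1}{2\pi}\int_0^{2\pi}P_r(\theta-t)\,|g(e^{it})|\,dt .
\end{equation*}
The standard pointwise domination of the Poisson integral by the Hardy--Littlewood maximal operator $M$ on ${\mathbb{T}}$ then gives, uniformly in $r$, the bound $g^*(e^{i\theta})\le C\,M\big(|g(e^{i\cdot})|\big)(e^{i\theta})$. Since $M$ is bounded on $L^2({\mathbb{T}})$, I conclude
\begin{equation*}
\|g^*\|_{L^2}\le C\big\|M(|g(e^{i\cdot})|)\big\|_{L^2}\le C'\,\big\|g(e^{i\cdot})\big\|_{L^2}=C'\|g\|_{H^2},
\end{equation*}
which is exactly what is needed.

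I expect the main obstacle to be the third paragraph: the pointwise control of the Poisson integral by the circular maximal function, together with the $L^2$-boundedness of $M$. Both are classical, so the real care lies in checking that the Riesz factorization and the power substitution $g=h^{p/2}$ are legitimate for every finite $p$ and in tracking that the final constant depends only on $p$ (through the exponent $2/p$ in $\|h^*\|_{L^p}=\|g^*\|_{L^2}^{2/p}$). Note that no evenness or integrality hypothesis on $p$ is involved here; this is the general Hardy--Littlewood inequality, and the restriction to even integers elsewhere in the paper plays no role.
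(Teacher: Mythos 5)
The paper does not actually prove this statement: it is quoted as a classical reference theorem, with a pointer to Duren's \emph{Theory of $H^p$ Spaces} (p.~12), so there is no internal proof to compare against. Your argument is correct, and it is essentially the standard proof given in such references: handle $p=\infty$ trivially; use Riesz factorization $f=Bh$ to pass to a zero-free $h$ of the same $H^p$ norm (note the trivial case $f\equiv 0$ should be set aside first); take the single-valued branch $g=h^{p/2}\in H^2$, which is legitimate because $h$ is nonvanishing on the simply connected domain ${\mathbb{D}}$ and because $\sup_r$ commutes with the increasing map $t\mapsto t^{2/p}$; and settle the one remaining case $p=2$ by dominating the Poisson integral of the boundary values pointwise by the circle's Hardy--Littlewood maximal operator $M$, whose $L^2({\mathbb{T}})$-boundedness is a genuinely distinct real-variable fact, so no circularity is involved. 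The resulting constant $(C')^{2/p}$ depends only on $p$, as required. The only blemishes are cosmetic: your display for $\|h^*\|_{L^p}^p$ drops the $\tfrac{1}{2\pi}$ normalization that the paper builds into its integral means, and the Poisson representation step should cite that $H^1$ functions are the Poisson integrals of their boundary functions -- both standard and easily repaired.
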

Further results of a similar type may be found in \cite{Garnett}.

\begin{definition}
For a function $f$ analytic in the unit disc, the Littlewood-Paley function is
$$g(\theta,f) = \left\{ \int_0^1 (1-r)|f'(re^{i\theta})|^2 dr 
\right\}^{1/2}.
$$
\end{definition}
A key result of Littlewood-Paley theory is that the Littlewood-Paley function, 
like the Hardy-Littlewood maximal function, belongs to $L^p$ if and only if 
$f \in H^p.$  Formally, the result may be stated as follows 
(see \cite{Zygmund}, Volume 2, Chapter 14,
Theorems 3.5 and 3.19).  
\begin{refthm}\label{littlewood_paley} {\rm \bf (Littlewood-Paley.)}
For $1<p<\infty$,  there are constants 
$C_p$ and $B_p$ depending only on $p$ so that 
\begin{equation*}
\|g(\cdot,f)\|_{L^p} \le C_p \|f\|_{H^p} 
\end{equation*}
for all functions $f$ analytic in ${\mathbb{D}}$, and
\begin{equation*}
\|f\|_{H^p} \le B_p \|g(\cdot,f)\|_{L^p}
\end{equation*}
for all functions $f$ analytic in ${\mathbb{D}}$ such that $f(0)=0.$
\end{refthm}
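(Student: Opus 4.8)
The plan is to treat the two inequalities separately, basing everything on two ingredients already in hand: the Hardy--Littlewood maximal theorem (Theorem \ref{hardy_littlewood}) and the Green-type representation $\frac1{2\pi}\int_0^{2\pi}u(e^{i\theta})\,d\theta-u(0)=\frac1{2\pi}\int_{{\mathbb{D}}}\Delta u(z)\log\frac1{|z|}\,dA(z)$, valid for $u\in C^2(\overline{{\mathbb{D}}})$ and extended to our functions by applying it to the dilates $f_\rho(z)=f(\rho z)$ and letting $\rho\to1$. For analytic $f$ and $h$ one has $\Delta|f|^p=p^2|f|^{p-2}|f'|^2$ and $\Delta(f\conj h)=4f'\conj{h'}$, while $\log\frac1{|z|}\asymp 1-|z|$ gives $g(\theta,f)^2\asymp\int_0^1\log\frac1r|f'(re^{i\theta})|^2 r\,dr$. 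Since $g$ ignores the constant term, for the first inequality I may assume $f(0)=0$, as the second already does. First I would dispose of $p=2$ by Parseval: if $f=\sum_{n\ge1}a_nz^n$, then $\int_0^{2\pi}g(\theta,f)^2\,d\theta=2\pi\sum n^2|a_n|^2\int_0^1(1-r)r^{2n-2}\,dr$, and $\int_0^1(1-r)r^{2n-2}\,dr=\frac1{(2n-1)(2n)}\asymp n^{-2}$ makes this comparable to $\sum|a_n|^2=\|f\|_{H^2}^2$, giving both inequalities at $p=2$.

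Next I would establish the two ``easy'' halves, each a short consequence of the Green identity applied to $u=|f|^p$, namely $\|f\|_{H^p}^p\asymp\int_{{\mathbb{D}}}\log\frac1{|z|}|f|^{p-2}|f'|^2\,dA$. For the lower bound when $p\ge2$, the pointwise estimate $|f|^{p-2}\le(f^*)^{p-2}$ gives $\|f\|_{H^p}^p\lesssim\int_0^{2\pi}(f^*)^{p-2}g(\theta,f)^2\,d\theta\le\|f^*\|_{L^p}^{p-2}\|g\|_{L^p}^2$ by H\"older, whence $\|f\|_{H^p}\le B_p\|g\|_{L^p}$ after invoking $\|f^*\|_{L^p}\lesssim\|f\|_{H^p}$. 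For the upper bound when $1<p<2$, the same comparison runs the other way: $|f|^{p-2}\ge(f^*)^{p-2}$ yields $g(\theta,f)^2\lesssim(f^*)^{2-p}B(\theta)$ with $B(\theta)=\int_0^1\log\frac1r|f|^{p-2}|f'|^2 r\,dr$ and $\int_0^{2\pi}B\,d\theta\asymp\|f\|_{H^p}^p$, and H\"older with exponents $\frac2{2-p}$ and $\frac2p$ gives $\int_0^{2\pi}g^p\,d\theta\lesssim\|f^*\|_{L^p}^{p(2-p)/2}\big(\int B\big)^{p/2}\lesssim\|f\|_{H^p}^p$, i.e.\ $\|g\|_{L^p}\le C_p\|f\|_{H^p}$.

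The bridge to the remaining ranges is a polarization identity. Writing $\langle f,h\rangle=\frac1{2\pi}\int_0^{2\pi}f\conj h\,d\theta$ and applying the Green representation to $u=f\conj h$ with $f(0)=h(0)=0$ gives $\langle f,h\rangle\asymp\int_{{\mathbb{D}}}\log\frac1{|z|}f'\conj{h'}\,dA$; a pointwise Cauchy--Schwarz in $r$ followed by H\"older then yields $|\langle f,h\rangle|\lesssim\int_0^{2\pi}g(\theta,f)g(\theta,h)\,d\theta\le\|g(\cdot,f)\|_{L^p}\|g(\cdot,h)\|_{L^{p'}}$. Thus an upper bound for $g$ at the exponent $p'$, together with the duality $\|f\|_{H^p}\asymp\sup_{\|h\|_{H^{p'}}\le1}|\langle f,h\rangle|$, produces the lower bound at the exponent $p$. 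In particular, once the upper bound is known for \emph{all} $p$, the lower bound for all $p$ follows automatically, and the easy halves above are seen to be mutually consistent.

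It remains to prove the upper bound $\|g\|_{L^p}\le C_p\|f\|_{H^p}$ for $p>2$, which the maximal-function argument does not reach; this is the main obstacle. Here I would linearize by setting $Tf(\theta)=\{(1-r)^{1/2}f'(re^{i\theta})\}_{0<r<1}$, so that $g(\theta,f)=\|Tf(\theta)\|_{\mathcal H}$ with $\mathcal H=L^2((0,1),dr)$ and $T$ a Hilbert-space-valued convolution operator whose $L^2$ boundedness is precisely the Parseval computation of the first paragraph. The hard part is verifying the H\"ormander smoothness condition for the $\mathcal H$-valued kernel; granting it, the Calder\'on--Zygmund theory for vector-valued singular integrals supplies a weak-type $(1,1)$ estimate, hence $\|g\|_{L^p}\lesssim\|f\|_{H^p}$ for $1<p\le2$ by interpolation with $L^2$ and for $2\le p<\infty$ by duality. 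Feeding this upper bound back through the polarization inequality and $H^p$ duality then gives the lower bound for every $1<p<\infty$, completing the proof; this kernel estimate is the classical technical core for which \cite{Zygmund} is cited.
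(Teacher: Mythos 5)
This statement is one of the paper's lettered reference theorems: the paper does not prove it at all, but quotes it from Zygmund (Vol.~2, Ch.~XIV, Theorems 3.5 and 3.19) and uses it as a black box in the proof of Theorem \ref{hardy_mult_int}. So there is no in-paper argument to match; what you have produced is a reconstruction of the classical proof, and it is essentially correct and correctly organized. Your route --- the Green identity $\frac{1}{2\pi}\int_0^{2\pi}u\,d\theta-u(0)=\frac{1}{2\pi}\int_{{\mathbb{D}}}\Delta u\,\log\frac{1}{|z|}\,dA$ applied to $u=|f|^p$, maximal-function comparisons for the two ``easy'' halves (the lower bound for $p\ge2$, the upper bound for $p\le2$), polarization plus $H^p$--$H^{p'}$ duality to convert upper bounds into lower bounds, and vector-valued Calder\'on--Zygmund theory for the remaining upper bound $p>2$ --- is the standard modern treatment (closer to Stein's \emph{Singular Integrals}, Ch.~IV, than to Zygmund's own more classical argument), and your exponent bookkeeping is right: after the maximal-function step exactly the upper bound for $p>2$ and the lower bound for $p<2$ are missing, and the duality bridge at exponent $p$ indeed requires the upper bound at $p'$, so the CZ step closes the loop.

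Three points would need patching in a full write-up, only one of which is substantive. (i) Your $\log\frac{1}{|z|}\asymp 1-|z|$ fails near $z=0$; but the inequality $\log\frac1r\le\frac{1-r}{r}$, i.e.\ $r\log\frac1r\le 1-r$, gives for free the direction you need in the lower-bound and polarization steps, while in the opposite direction the inner-disc contribution $\int_0^{1/2}(1-r)|f'(re^{i\theta})|^2\,dr$ is bounded uniformly in $\theta$ by $C\|f\|_{H^1}^2$ via a Cauchy estimate and is harmless in the $p<2$ upper bound. (ii) For $1<p<2$ the identity $\Delta|f|^p=p^2|f|^{p-2}|f'|^2$ degenerates at zeros of $f$, and your $B(\theta)$ has a singular integrand there; one must regularize with $(|f|^2+\epsilon^2)^{p/2}$ or divide out a Blaschke product --- standard, but not elidable. (iii) Dividing by $\|f\|_{H^p}^{p-2}$ in the $p\ge2$ lower bound needs the a priori finiteness your dilation set-up supplies; note also $g(\theta,f_\rho)\le g(\theta,f)$, so the bound survives $\rho\to1$. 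The genuinely substantive omission is the H\"ormander condition for the $L^2((0,1),dr)$-valued kernel $K(\theta)(r)=(1-r)^{1/2}e^{-i\theta}(1-re^{i\theta})^{-2}$, which you explicitly grant: it does hold, since $\|K(\theta)\|_{\mathcal{H}}\asymp|\theta|^{-1}$ and $\|\partial_\theta K(\theta)\|_{\mathcal{H}}\lesssim|\theta|^{-2}$ by splitting $\int_0^1 s\,(s+|\theta|)^{-4}\,ds$ and its analogue at $s=|\theta|$, but since that estimate is precisely the technical core for which the paper cites \cite{Zygmund}, a self-contained proof would have to carry out this computation rather than defer it. As a sketch, yours is a faithful account of why the cited theorem is true.
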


We now apply the Littlewood-Paley theorem to obtain the following result, 
from which Lemmas \ref{kleq} and \ref{klb} will follow.   
\begin{theorem}\label{hardy_mult_int}
Suppose $1 < p_1, p_2 \le \infty$, and let $p$ be defined by 
$1/p = 1/p_1 + 1/p_2.$  
Suppose furthermore that $1 < p < \infty.$  If $f_1 \in H^{p_1}$ and 
$f_2 \in H^{p_2},$ and $h$ is defined by 
$$
h(z) = \int_0^z f_1(\zeta)f_2'(\zeta)\, d\zeta,$$
then $h \in H^p$ and $\|h\|_{H^p} \le C \|f_1\|_{H^{p_1}}   
\|f_2\|_{H^{p_2}}$, where $C$ depends only on $p_1$ and $p_2$.      
\end{theorem}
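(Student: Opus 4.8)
The plan is to derive the bound from the Littlewood-Paley characterization of $H^p$ (Theorem \ref{littlewood_paley}) combined with the Hardy-Littlewood maximal theorem (Theorem \ref{hardy_littlewood}). First I would observe that $h(0) = 0$ and $h'(z) = f_1(z)f_2'(z)$, so the Littlewood-Paley function of $h$ satisfies, pointwise in $\theta$,
$$g(\theta,h)^2 = \int_0^1 (1-r)|f_1(re^{i\theta})|^2|f_2'(re^{i\theta})|^2\,dr \le (f_1^*(e^{i\theta}))^2\int_0^1 (1-r)|f_2'(re^{i\theta})|^2\,dr,$$
where the last step uses $|f_1(re^{i\theta})| \le f_1^*(e^{i\theta})$ for every $r$. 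This gives the clean pointwise domination $g(\theta,h) \le f_1^*(e^{i\theta})\,g(\theta,f_2)$, which is the heart of the argument: it transfers the product structure of $h'$ into a product of a maximal function and a Littlewood-Paley function.

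In the main case $p_2 < \infty$ I would then apply H\"older's inequality with the exponents $p_1/p$ and $p_2/p$, which are conjugate precisely because $1/p = 1/p_1 + 1/p_2$, to obtain $\|g(\cdot,h)\|_{L^p} \le \|f_1^*\|_{L^{p_1}}\|g(\cdot,f_2)\|_{L^{p_2}}$. Now Theorem \ref{hardy_littlewood} bounds $\|f_1^*\|_{L^{p_1}} \le C\|f_1\|_{H^{p_1}}$ (valid for $1 < p_1 \le \infty$), and the forward inequality of Theorem \ref{littlewood_paley} bounds $\|g(\cdot,f_2)\|_{L^{p_2}} \le C\|f_2\|_{H^{p_2}}$. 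Finally, since $h(0)=0$ and $1 < p < \infty$, the reverse inequality of Theorem \ref{littlewood_paley} yields $\|h\|_{H^p} \le B_p\|g(\cdot,h)\|_{L^p}$; as the right-hand side is finite, this simultaneously shows $h \in H^p$ and gives the claimed estimate, with a constant depending only on $p_1$ and $p_2$.

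The main obstacle will be the endpoint $p_2 = \infty$, where the forward Littlewood-Paley inequality is not available. I would first note that this case forces $p_1 = p < \infty$, since $p < \infty$ precludes $p_1 = p_2 = \infty$. To handle it I would integrate by parts, writing $h(z) = f_1(z)f_2(z) - f_1(0)f_2(0) - \tilde{h}(z)$ with $\tilde{h}(z) = \int_0^z f_1'(\zeta)f_2(\zeta)\,d\zeta$. The product $f_1 f_2$ lies in $H^{p_1} = H^p$ with $\|f_1 f_2\|_{H^p} \le \|f_1\|_{H^{p_1}}\|f_2\|_{H^\infty}$, the constant term is controlled by $|f_1(0)f_2(0)| \le \|f_1\|_{H^{p_1}}\|f_2\|_{H^\infty}$, and for $\tilde{h}$ the roles of $f_1$ and $f_2$ are reversed: now $f_2$ carries the maximal function, with $f_2^* \le \|f_2\|_{H^\infty}$, so $g(\theta,\tilde{h}) \le \|f_2\|_{H^\infty}\,g(\theta,f_1)$, and the forward Littlewood-Paley inequality may legitimately be applied to $f_1$ because $1 < p_1 < \infty$. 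Combining these three pieces with the reverse Littlewood-Paley inequality for $\tilde{h}$ closes the remaining case and completes the proof.
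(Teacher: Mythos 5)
Your proposal is correct and follows essentially the same argument as the paper: the pointwise domination $g(\theta,h) \le f_1^{*}(e^{i\theta})\,g(\theta,f_2)$, then H\"older's inequality, the Hardy--Littlewood maximal theorem, and both directions of the Littlewood--Paley theorem, with the endpoint $p_2 = \infty$ handled by the same integration-by-parts reduction to the finite-exponent case. The only difference is cosmetic (you apply H\"older before the reverse Littlewood--Paley inequality rather than after, and you spell out explicitly the role-reversal that the paper invokes with ``by what we have already shown'').
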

\begin{proof}
By the definitions of the Littlewood-Paley function and the 
Hardy-Littlewood maximal function, 
\begin{equation*}
\begin{split}
g(\theta, h) &= \left\{ \int_0^1 (1-r)|f_1(re^{i\theta})f_2'(re^{i\theta})|^2\, dr \right\}^{1/2}\\ &\le 
f_{1}^*(\theta)\left\{ \int_0^1 (1-r)|f_2'(re^{i\theta})|^2 \,dr \right\}^{1/2}\\
&= f_{1}^*(\theta) g(\theta,f_2).
\end{split}
\end{equation*}
Therefore, since $h(0)=0$, 
Theorem \ref{littlewood_paley} gives 
\begin{equation*}
\|h\|_{H^p} \le C \|g(\cdot, h)\|_{L^p}
\le C \|f_1^* \, g(\cdot, f_2)\|_{L^p}.
\end{equation*}
Applying first H\"{o}lder's inequality and then Theorem \ref{hardy_littlewood},
we infer that 
\begin{equation*}
\|h\|_{H^p} \le 
C\|f_{1}^*\|_{L^{p_1}} \|g(\cdot, f_2)\|_{L^{p_2}} \le 
C\|f_{1}\|_{H^{p_1}} \|g(\cdot, f_2)\|_{L^{p_2}}. \end{equation*}
If $p_2 < \infty$,  
Theorem \ref{littlewood_paley} allows us to conclude that 
\begin{equation*}
\|h\|_{H^p} \le 
C \|f_1\|_{H^{p_1}} \|f_2\|_{H^{p_2}}.
\end{equation*}
This proves the claim under the assumption that $p_2 < \infty$.

If $p_2 = \infty,$ then $p_1 < \infty$ by assumption. 
Integration by parts gives 
$$h(z) = f_1(z)f_2(z) - f_1(0)f_2(0) - \int_0^z f_2(\zeta)f_1'(\zeta)\,d\zeta.$$
The $H^p$ norm of the first term is bounded by 
$\|f_1\|_{H^{p_1}} \|f_2\|_{H^{p_2}}$, by H\"{o}lder's inequality.
The second term is bounded by $C\|f_1\|_{H^{p_1}} \|f_2\|_{H^{p_2}}$ 
for some $C$, since point evaluation is a bounded functional on 
Hardy spaces.  The $H^p$ norm of the 
last term is bounded by $C\|f_1\|_{H^{p_1}} \|f_2\|_{H^{p_2}}$,
by what we have already shown, and thus 
$\|h\|_{H^p} \le C \|f_1\|_{H^{p_1}} \|f_2\|_{H^{p_2}}$.  
\end{proof}

Theorem \ref{hardy_mult_int} will now be used together with the Cauchy-Green 
theorem to prove Lemmas \ref{klb} and \ref{kleq}. 
\begin{proof}[{\bf Proof of Lemma \ref{klb}}]
Define 
$$I_r = \int_{r{\mathbb{D}}} \conj{f_1} f_2 f_3' \,dA \ \quad\text{ and }\quad\ 
H(z) = \int_0^z f_2(\zeta) f_3'(\zeta) d\zeta.$$
Then Theorem \ref{hardy_mult_int} says that $H \in H^q$ and that  
$\|H\|_{H^q} \le C \|f_2\|_{H^{p_2}} \|f_3\|_{H^{p_3}}$,
where 
$\tfrac{1}{q} = \tfrac{1}{p_2} + \tfrac{1}{p_3}.$
By the Cauchy-Green formula, 
\begin{equation*}
I_r = \frac{i}{2}\int_{\partial(r{\mathbb{D}})} \conj{f_1(z)} H(z)\, d\conj{z}.
\end{equation*}
Since $1/p_1 + 1/q = 1$, 
H\"{o}lder's inequality gives
\begin{equation*}
|I_r| = \frac{1}{2} \left|\int_{\partial(r{\mathbb{D}})} \conj{f_1(z)} H(z)\,d\conj{z} \right| \le 
\pi M_{p_1}(f_1,r) M_q(H,r).
\end{equation*}
But since $\|H\|_{H^q} \le C \|f_2\|_{H^{p_2}} \|f_3\|_{H^{p_3}}$, 
this shows that 
\begin{equation*}
|I_r| 
\le C \|f_1\|_{H^{p_1}} \|f_2\|_{H^{p_2}} \|f_3\|_{H^{p_3}},
\end{equation*}
which bounds the principal value in question, assuming it exists.

To show that it exists, note that for $0<s<r$, the Cauchy-Green formula gives
\begin{equation*}
\begin{split}
2|I_r-I_s| &= \left|\int_{\partial(r{\mathbb{D}} - s{\mathbb{D}})} \conj{f_1(z)} H(z) \, d\conj{z} \right|\\ 
&= \left| \int_0^{2\pi} \left[r\conj{f_1(re^{i\theta})}H(re^{i\theta}) - 
s\conj{f_1(se^{i\theta})}H(se^{i\theta})\right] e^{-i\theta}\,d\theta \right| \\
&\le \left| \int_0^{2\pi} \conj{f_1(re^{i\theta})}\left(rH(re^{i\theta})-
sH(se^{i\theta})\right)e^{-i\theta}\,d\theta \right| \\ &\quad + 
\left| \int_{0}^{2\pi}
s\left(\conj{f_1(re^{i\theta})} - \conj{f_1(se^{i\theta})}\right)
H(se^{i\theta})\, e^{-i\theta}\,d\theta \right|.
\end{split}
\end{equation*}
We let $f_r(z) = f(rz).$ 
Then H\"{o}lder's inequality shows that the expression on the right
of the above inequality is at most 
\begin{equation*}
M_{p_1}(f_1,r)\|rH_r - sH_s\|_{H^q} + 
s \|(f_1)_r - (f_1)_s\|_{H^{p_1}} M_q(H,r).
\end{equation*}
Since $p_1 < \infty$ and  $q < \infty$, we know that
$(f_1)_r \rightarrow f_1$ in $H^{p_1}$ as $r \rightarrow 1,$ and 
$H_r \rightarrow H$ in $H^q$ as $r \rightarrow 1$
(see \cite{D_Hp}, p. 21). 
Thus  
the above quantity approaches $0$ as $r,s \rightarrow 1$, which shows that 
the principal value exists.

For the last part of the lemma, what was already shown gives 
\begin{equation*}
\begin{split}
\pv \int_{{\mathbb{D}}} \conj{f_1}f_2 f_3' d\sigma - \int_{{\mathbb{D}}} \conj{f_1}f_2(S_n f_3)' d\sigma &= 
\pv \int_{{\mathbb{D}}} \conj{f_1}f_2 (f_3-S_n f_3)' d\sigma \\ &\le
C \|f_1\|_{H^{p_1}} \|f_2\|_{H^{p_2}} \|f_3-S_n(f_3)\|_{H^{p_3}}.
\end{split} 
\end{equation*}
By assumption $p_3 > 1$. If also $p_3 < \infty$, then the 
right hand side approaches $0$ as 
$n\rightarrow \infty$, which finishes the proof. 
\end{proof}

\begin{proof}[{\bf Proof of Lemma \ref{kleq}}]
We know that $f^{p/2} \in H^2$ and $f^{(p/2)-1} \in H^{2p/(p-2)}.$  Since 
$h$ is a polynomial, we have $f^{(p/2)-1}h \in H^{2p/(p-2)}.$  
Also,
\begin{equation*}
\frac{1}{2} + \frac{p-2}{2p} + \frac{1}{p} = 1.
\end{equation*}
Thus, Lemma \ref{klb} with $f_1 = f^{p/2}$, and $f_2=f^{(p/2)-1}h$, 
and $f_3=f$ gives the result.  

\end{proof}

\providecommand{\bysame}{\leavevmode\hbox to3em{\hrulefill}\thinspace}
\providecommand{\MR}{\relax\ifhmode\unskip\space\fi MR }
\providecommand{\MRhref}[2]{%
  \href{http://www.ams.org/mathscinet-getitem?mr=#1}{#2}
}
\providecommand{\href}[2]{#2}


\begin{thebibliography}{10}

\bibitem{Clarkson}
James~A. Clarkson, \emph{Uniformly convex spaces}, Trans. Amer. Math. Soc.
  \textbf{40} (1936), no.~3, 396--414. \MR{1501880}

\bibitem{D_Hp}
Peter Duren, \emph{Theory of {$H\sp{p}$} spaces}, Dover, Mineola, New York,
  2000.

\bibitem{D_Ap}
Peter Duren and Alexander Schuster, \emph{Bergman spaces}, Mathematical Surveys
  and Monographs, vol. 100, American Mathematical Society, Providence, RI,
  2004. \MR{2033762 (2005c:30053)}

\bibitem{me}
Timothy Ferguson, \emph{Continuity of extremal elements in uniformly convex
  spaces}, Proc. Amer. Math. Soc. \textbf{137} (2009), no.~8, 2645--2653.

\bibitem{Garnett}
John~B. Garnett, \emph{Bounded analytic functions}, Pure and Applied
  Mathematics, vol.~96, Academic Press Inc. [Harcourt Brace Jovanovich
  Publishers], New York, 1981. \MR{628971 (83g:30037)}

\bibitem{Hewitt_Stromberg}
Edwin Hewitt and Karl Stromberg, \emph{Real and abstract analysis}, 
  Third printing, Graduate Texts in Mathematics, No. 25.,
  Springer-Verlag, New York, 1975.  \MR{0367121 (51 \#3363)}

\bibitem{Khavinson_Stessin}
Dmitry Khavinson and Michael Stessin, \emph{Certain linear extremal problems in
  {B}ergman spaces of analytic functions}, Indiana Univ. Math. J. \textbf{46}
  (1997), no.~3, 933--974. \MR{1488342 (99k:30080)}

\bibitem{Ryabykh}
V.~G. Ryabykh, \emph{Extremal problems for summable analytic functions},
  Sibirsk. Mat. Zh. \textbf{27} (1986), no.~3, 212--217, 226.
  \MR{853902 (87j:30058)}

\bibitem{Shapiro_Approx}
Harold~S. Shapiro, \emph{Topics in approximation theory}, Springer-Verlag,
  Berlin, 1971, Lecture
  Notes in Math., Vol. 187. \MR{0437981 (55 \#10902)}

\bibitem{Dragan}
Dragan Vukoti{\'c}, \emph{Linear extremal problems for {B}ergman spaces},
  Exposition. Math. \textbf{14} (1996), no.~4, 313--352. \MR{1418027
  (97m:46117)}

\bibitem{Zygmund}
A.~Zygmund, \emph{Trigonometric series: {V}ols. {I}, {II}}, Second edition,
  Cambridge University Press,
  London, 1968. \MR{0236587 (38 \#4882)}

\end{thebibliography}
\end{document}